\newtheorem{lem}{Lemma}[section]
\newtheorem{cor}[lem]{Corollary}
\newtheorem{thm}[lem]{Theorem}
\newtheorem{assum}[lem]{Assumption}
\newtheorem*{prop*}{Proposition}
\newtheorem*{thm*}{Theorem}
\newtheorem*{def*}{Definition}
\newtheorem*{lem*}{Lemma}
\numberwithin{equation}{section}
\numberwithin{lem}{section}
\newcommand{\E}{\mathbb{E}}
\newcommand{\R}{\mathbb{R}}
\renewcommand{\P}{\mathbb{P}}
\title{Distributional Adversarial Attacks and Training in Deep Hedging}
\author{%
Guangyi He\\
Department of Mathematics\\
Imperial College London\\
\texttt{g.he23@imperial.ac.uk}\\
\And
Tobias Sutter\\
Department of Economics\\
University of St.\ Gallen\\
\texttt{tobias.sutter@unisg.ch}\\
\And
Lukas Gonon\\
School of Computer Science\\
University of St.\ Gallen\\
\texttt{lukas.gonon@unisg.ch}\\
Department of Mathematics\\
Imperial College London\\
\texttt{l.gonon@imperial.ac.uk}
}
\begin{document}

\maketitle

\begin{abstract}
In this paper, we study the robustness of classical deep hedging strategies under distributional shifts by leveraging the concept of adversarial attacks. We first demonstrate that standard deep hedging models are highly vulnerable to small perturbations in the input distribution, resulting in significant performance degradation. Motivated by this, we propose an adversarial training framework tailored to increase the robustness of deep hedging strategies. Our approach extends pointwise adversarial attacks to the distributional setting and introduces a computationally tractable reformulation of the adversarial optimization problem over a Wasserstein ball. This enables the efficient training of hedging strategies that are resilient to distributional perturbations. 
Through extensive numerical experiments, we show that adversarially trained deep hedging strategies consistently outperform their classical counterparts in terms of out-of-sample performance and resilience to model misspecification. Additional results indicate that the robust strategies maintain reliable performance on real market data and remain effective during periods of market change. Our findings establish a practical and effective framework for robust deep hedging under realistic market uncertainties.\footnote{The code is available on 
\def\UrlBreaks{\do\/\do-}
 \url{https://github.com/Guangyi-Mira/Distributional-Adversarial-Attacks-and-Training-in-Deep-Hedging}}
\end{abstract}

\section{Introduction}
\label{sec:introduction}
In the past decades, options and derivatives trading has seen enormous growth, with billions of contracts traded every year~\cite{CboeGLOBALMARKETS}. Hedging of financial derivatives refers to taking financial positions in other linked assets to mitigate the risk associated with the derivative. Thereby, hedging is a fundamentally important task in the financial services industry. 
Deep Hedging~\cite{buehler2019deep} introduces a model-free, data-driven framework that employs deep learning to optimize hedging strategies directly from simulated market scenarios. The core idea is to parameterize hedging decisions via neural networks. Optimal hedging is achieved by training the neural networks to minimize a risk measure of the hedged portfolio’s profit and loss (P\&L). While this approach has been widely adopted in industry, the performance of learned hedging strategies critically depends on the quality of the training data. For instance, the data-generating distribution during training might differ slightly from the test distribution under which the deep hedging strategy is ultimately deployed and evaluated. This mismatch, known as \textit{model misspecification}, can result in suboptimal or even misleading decisions, a phenomenon that is well documented in various classes of data-driven decision-making problems~\cite{ref:sutter2021robust,ref:Zijian-22}. 
This highlights the fundamental challenge of modeling uncertainty in finance and the importance of designing strategies that remain robust to small changes in the underlying distribution.

To mitigate model uncertainty in decision-making problems, a large variety of different approaches exist.
Parametric methods quantify uncertainty on model parameters, allowing strategies to account for potential estimation errors, like in \cite{ref:Feng-23,ref:Liyanage-05}. Moreover, advances in machine learning enable the use of generative models with high-dimensional parameter spaces to simulate complex dynamics \cite{Ni2021,Xu2020,Yoon2019}. In the field of deep hedging, recent works consider robustness by perturbing the terminal distribution \cite{wu2023robust}, randomized model parameters \cite{lutkebohmert2022robust} or penalizing deviations from a reference distribution \cite{limmer2024robust}.  

On the other hand, model uncertainty can be addressed within the framework of \emph{distributionally robust optimization} (DRO) \cite{ref:Kuhn:DRObook-24}. DRO seeks to make optimal decisions under the worst-case scenario over a specified set of probability distributions, known as the \emph{ambiguity set}. This problem can be formulated as a two-player game
\begin{equation}
    \label{eq:DRO}
    \text{min}_{\theta \in \Theta}\;\text{max}_{\eta \in B_\delta(\mu)}\; \mathbb{E}_{x \sim \eta} \big[ l(\theta; x) \big],
\end{equation}
where $\theta$ denotes the decision variable optimized over the parameter space $\Theta$, representing the first player. The adversarial player selects a distribution $\eta$ from an ambiguity set $B_\delta(\mu)$ of plausible distributions centered around a nominal distribution $\mu$. The objective is described by the loss function $l(\theta; x)$, parameterized by $\theta$ and evaluated at an input $x \in \mathbb{R}^d$, which is modeled as a random variable.
The parameter $\delta > 0$ controls the size of the ambiguity set, with $B_0(\mu) = \{\mu\}$. Thus, when $\delta = 0$, the DRO problem \eqref{eq:DRO} reduces to the nominal stochastic optimization problem
\begin{equation}
    \label{eq:SO}
    \text{min}_{\theta \in \Theta}\; \mathbb{E}_{x \sim \mu} \big[ l(\theta; x) \big].
\end{equation}
Various functions to encode the difference between probability distributions in $B_\delta(\mu)$ are popular, such as the $\phi$-divergence~\cite{ref:BenTal-13}, matching moments~\citep{delage2010distributionally}, and the total variation distance~\cite{ref:Rahimian-19}. Among these, the Wasserstein distance is particularly popular and powerful, see the appendix for the detailed definition and see \cite{ref:Peyman-18,ref:Kuhn:tutorial-19,ref:Kuhn:DRObook-24} for detailed treatments of Wasserstein-based DRO problems.
These existing computational tractability results in Wasserstein DRO rely on structural assumptions about the underlying loss function, such as convexity, which do not hold in the deep hedging setting considered in this work.

The concept of robustness is important across machine learning applications. 
In image classification tasks,~\cite{szegedy2014intriguing} first highlighted that neural networks are vulnerable to adversarial attacks—small, imperceptible changes to input images that cause misclassification. This discovery led to a growing body of research on adversarial attacks and training techniques to improve network robustness against them (\cite{goodfellow2014explaining,madry2018towards,zhang2019theoretically}). However, most studies have focused on pointwise adversarial perturbations, where attacks modify individual data points. A broader perspective considers distributional attacks, where an adversary perturbs the entire data-generating distribution rather than specific samples (\cite{bai2023wasserstein,bai2025wasserstein,dong2024enhancing,ref:Kuhn:tutorial-19,ref:Kuhn:DRObook-24,staib2017distributionally,ref:Gao-23}). Specifically, the distributional version of adversarial attacks can be applied to search for the worst-case scenario in the DRO problem.

In this paper, we study the robustness properties of classical deep hedging strategies under distribution shifts by utilizing the concept of adversarial attacks. The main contributions of this paper  can be summarized as follows:
\begin{itemize}
    \item We propose the first framework that unifies distributional adversarial training with deep hedging. Building on tractable reformulations from Wasserstein DRO~\cite{staib2017distributionally} and sensitivity analysis results~\cite{bai2023wasserstein}, our approach derives computationally efficient adversarial attacks within a Wasserstein ball and integrates them into an adversarial training procedure. This framework reveals the vulnerability of classical deep hedging to even small distributional shifts and provides a tractable method to enhance robustness.
    \item Prior work \cite{ref:Kuhn:DRObook-24,ref:Peyman-18,ref:Kuhn:tutorial-19} establishes that data-driven Wasserstein DRO solutions guarantee out-of-sample performance under certain theoretical assumptions. However, in complex dynamic settings such as deep hedging, these assumptions do not generally hold. Through adversarial training with limited data (5,000–100,000 samples), we empirically demonstrate that adversarially trained strategies outperform classical deep hedging methods in terms of out-of-sample and out-of-distribution performance.
    \item We validate our framework on real market data, demonstrating that adversarially trained strategies remain effective and robust during periods of market stress and distributional shifts, and compare our results against the robust deep hedging approach of \cite{lutkebohmert2022robust}.

\end{itemize}

\paragraph{Related work.} 
Deep hedging was introduced in \cite{buehler2019deep,buehler2019deep2}. Since its introduction, numerous works have studied deep hedging and related machine learning-based hedging methods from different perspectives. 
Robustness of deep hedging methods has been studied in \cite{wu2023robust,lutkebohmert2022robust,limmer2024robust}.
Further directions include tackling complex pricing problems \cite{carbonneau2021equal,carbonneau2021deep,biagini2023}, targeting improved efficiency \cite{imaki2021no,Mueller2024} or 
general initial portfolios
\cite{murray_2022_deep}, alternative reinforcement learning frameworks \cite{kolm2019dynamic,du2020deep,cao2021deep,vittori2020option}, empirical approaches \cite{ruf2022hedging,mikkila2023empirical}, or implementations on quantum hardware \cite{raj2023quantum}. We refer to the surveys \cite{hambly_2023_recen, ruf_2020_neura}
for a comprehensive overview.

Parallel to this, distributionally robust optimization (DRO) has emerged as a framework for decision-making under model uncertainty, with Wasserstein-based methods offering both theoretical guarantees and computational tractability \cite{ref:Kuhn:DRObook-24,ref:Peyman-18,ref:Kuhn:tutorial-19}. In machine learning, adversarial training provides another robustness perspective, where worst-case perturbations improve generalization \cite{szegedy2014intriguing,goodfellow2014explaining,madry2018towards,zhang2019theoretically}. Recent works extend pointwise adversarial attacks to the distributional setting \cite{bai2023wasserstein,bai2025wasserstein,dong2024enhancing,staib2017distributionally,ref:Gao-23}, thereby connecting adversarial training methods with DRO formulations.

\section{Deep Hedging Framework}
\label{sec:deep_hedging}

\paragraph{Basic setup.}
In Deep Hedging~\cite{buehler2019deep}, we consider a discrete-time market with trading dates $\{0, 1, \dots, T\}$ and a set of $r$ tradable assets. Let $\textbf{S} = (S_1, \dots, S_{T+1}) \in \mathbb{R}^{r \times (T+1)}$ denote the mid-price trajectories of the assets. The price process $\textbf{S}$ is adapted to a filtration $(\mathcal{F}_t)_{t=0}^T$ generated by an information process $\textbf{I} = (I_1, \dots, I_{T+1})$, where each $I_t \in \mathbb{R}^d$ captures market-relevant features at time $t$ such as asset prices, volatility, risk limits, or trading signals. We denote the price trajectories $\textbf{S}(\textbf{I)}$ as a function of $\textbf{I}$. The trajectory $\textbf{I}$ is sampled from a known distribution $\mu$, which may correspond to a stochastic model or a uniform distribution over simulated or historical scenarios.
A trading strategy $\delta = (\delta_1, \dots, \delta_{T-1}) \in \mathbb{R}^{r \times T}$ represents the portfolio holdings in the $r$ assets. Each position $\delta_t$ is determined by a neural network that is parameterized by $\theta = (\theta_1,...,\theta_T)$ and takes the history of available information up to time $t$ as input,
\begin{equation}\label{eq:net_strategy}
\delta_t = f_{\theta_t}(I_1, \dots, I_t),
\end{equation}
where $f_\theta$ is a function parameterized by network weights $\theta_t\in\mathbb{R}^n$. Therefore, the whole trading strategy $\delta$ depends on the the network parameter $\theta\in\Theta$ and information process $\textbf{I}$, denoted $\delta(\theta,\textbf{I})$.
 
We consider hedging a contingent claim with payoff $P(S_T)$ at maturity, where $P:\mathbb{R}^r\to\mathbb{R}$ is a given payoff function. The profit and loss (PnL) of the hedging position is defined as
\begin{equation}
\label{eq:PnL}
\mathrm{PnL}(\theta,\textbf{I}) = p_0 + \sum\nolimits_{t=1}^{T} \delta_t^\top(\theta,\textbf{I}) (S_{t+1}(\textbf{I}) - S_t(\textbf{I})) - P(S_T(\textbf{I}))
\end{equation}
for a given price $p_0 \geq 0$. 
The objective is to optimize the hedging outcome $\mathrm{PnL}(\theta,\textbf{I})$ under a convex risk measure $\rho$, such as the entropic risk measure or Conditional Value at Risk (CVaR) \cite{follmerschied}, i.e., solve
\begin{equation}
\label{eq:deep_hedging}
 \text{min}_{\theta\in\Theta} \;\rho\left[\mathrm{PnL}(\theta,\textbf{I})\right].
\end{equation}

\paragraph{Optimized Certainty Equivalents.}
We focus on convex risk measures that admit an optimized certainty equivalent (OCE) form. Specifically, consider convex risk measures that can be expressed as
\begin{equation}
\label{eq:OCE}
    \rho(Z) = \inf_{\omega \in \mathbb{R}} \left\{\omega + \mathbb{E}\left[\ell(-Z-\omega)\right]\right\},
\end{equation}
for random variables $Z$ and a fixed loss function $l \colon \R \to \R$. OCE risk measures form an important class of convex risk measures \cite{bental2007}, naturally suited for deep hedging \cite{buehler2019deep}.
During training, we implement an OCE risk measure by treating $\omega$ as a trainable parameter. That is, we consider an augmented parameter $\tilde{\theta} := (\theta,\omega)$ and define the deep hedging loss as
\begin{equation} \label{eq:OCE_DHloss}
    l_{\text{DH}}(\tilde{\theta},\mathbf{I}) = \omega + \ell(-\text{PnL}(\theta,\mathbf{I}) - \omega).
\end{equation}
We can equivalently express the deep hedging problem \eqref{eq:deep_hedging} as a standard expected loss minimization
\begin{equation}
\label{eq:deep_hedging_loss}
    \text{min}_{\tilde{\theta}} \;\mathbb{E}_{\mathbf{I}\sim\mu}[l_{\text{DH}}(\tilde{\theta},\mathbf{I})].
\end{equation} 
During the evaluation, the optimal $\omega$ can be obtained by directly solving \eqref{eq:OCE}.

\paragraph{DRO Formulation of Deep Hedging.}
The reformulated deep hedging problem~\eqref{eq:deep_hedging_loss} now has the common structure of \eqref{eq:SO}, which induces a corresponding DRO formulation
\begin{equation}
\label{eq:deep_hedging_DRO}
    \text{min}_{\tilde{\theta}}\;\text{max}_{\eta \in B_\delta(\mu)} \;\mathbb{E}_{\mathbf{I}\sim\eta}[l_{\text{DH}}(\tilde{\theta},\mathbf{I})].
\end{equation}

The OCE framework naturally encompasses common convex risk measures such as entropic risk and Conditional Value-at-Risk (CVaR), whose specific implementations will be demonstrated in subsequent examples. Crucially, the OCE reformulation aligns with standard DRO formulations, enabling direct utilization of established convergence guarantees and computational methods.

\paragraph{Examples.} We consider a Black-Scholes model and General Affine Diffusion model with entropic risk and a Heston model with CVaR. Detailed formulations of these models and the associated loss functions are provided in the appendix.

\section{Adversarial Attacks}
\label{sec:adversarial_attacks}

\paragraph{Pointwise adversarial attacks.}
Pointwise adversarial attacks aims to perturb the input within a $\delta$-ball centered at the input to maximize the corresponding loss, that is
\begin{equation}
    \text{max}_{\hat{x}\in\mathbb{R}^d}\;l(\theta;\hat{x}) \quad \text{subject to}\quad d(\hat{x},x)\leq\delta.
\end{equation}
Two cornerstone algorithms in this domain are the Fast Gradient Sign Method (FGSM)~\cite{goodfellow2014explaining} and Projected Gradient Descent (PGD)~\cite{madry2018towards}.
For input $x$ and $L^\infty$ distance $d(\cdot,\cdot)$, the FGSM perturbation is computed as
\begin{equation}
    \label{eq:FGSM}
    \hat{x} = x + \delta \cdot \text{sign}(\nabla_x l(\theta; x)),
\end{equation}
where $\delta$ is the perturbation magnitude, $\nabla_x l(\theta; x)$ is the gradient of the loss function $l$ with respect to the input $x$, and $\text{sign}(\cdot)$ denotes the element-wise sign function. FGSM directly perturbs the input data in the direction of the gradient to the boundary of the $\delta$-ball around the original input $x$, which is computationally efficient. For a deeper exploration of the worst-case perturbation, PGD is an iterative extension of FGSM that applies multiple small perturbations to the input. At each iteration, the input is updated as
\begin{equation}
    \label{eq:PGD}
    \hat{x}^{(k+1)} = \text{Proj}_{d(\cdot,x)\leq\delta} \big( \hat{x}^{(k)} + \beta \cdot \text{sign}(\nabla_x l(\theta; \hat{x}^{(k)})) \big),
\end{equation}
where $\beta$ is the step size, and $\text{Proj}_{d(\cdot,x)\leq\delta}$ projects the perturbed input back into the $\delta$-ball around $x$ to ensure the perturbation remains bounded.

\paragraph{Distributional adversarial attacks.} Distributional adversarial attacks aim to find the worst-case data distribution perturbation within an ambiguity set $B_\delta(\mu)$ that maximizes the expected loss of the model. This is formalized as  
\begin{equation}
\label{eq:DAA}
    V_\theta(\delta) = \text{max}_{\eta \in B_\delta(\mu)} \;\mathbb{E}_{x \sim \eta}[l(\theta; x)],
\end{equation}
where an optimal perturbed distribution $\eta^*$ achieves this maximum. 

Solving \eqref{eq:DAA} requires to optimize over an infinite-dimensional Wasserstein ball, making direct approaches intractable. Therefore, we provide a reformulation that approximates \eqref{eq:DAA} based on the sensitivity results on DRO in Wasserstein balls that are inspired by \cite{bartl2021sensitivity}.
The proposed reformulation relies on the following two assumptions.
\begin{assum}
\label{assumption1}
    The distance $d$ on $\R^d$ is induced by a norm \(\|\cdot\|\), with corresponding dual norm \(\|\cdot\|_*\) defined as
    \begin{equation}
        \|y\|_* = \sup_{\|x\| \leq 1} \langle y, x \rangle.
    \end{equation}
    Furthermore, there exists a function $h:\R^d\to\R^d$ such that
    \begin{equation}
        \|x\|_* = \langle h(x), x \rangle \quad \text{for all } x \in \R^d.
    \end{equation}
\end{assum}
\begin{assum}
\label{assumption2}
    For each $\theta$, the map $x\mapsto l(\theta;x)$ is Lipschitz continuous. 
\end{assum}
Assumption~\ref{assumption1} is satisfied, e.g., for $d$ chosen as the distance induced by the $\|\cdot\|_p$ norm. In this case, the dual norm is given by $\|\cdot\|_q$, where $q$ is the Hölder conjugate  satisfying $1/p + 1/q = 1$. The corresponding function $h:\mathbb{R}^d \to \mathbb{R}^d$ is explicitly given (with operations applied component-wise) as
\begin{equation}
   h(x) = \mathrm{sign}(x)\,|x|^{q-1}.
\end{equation}
Additionally, we focus on the data-driven framework, where the input distribution $\mu$ is the empirical distribution constructed from the training set $\{X_1,\dots,X_N\}$, i.e., $
    \mu = \frac{1}{N}\sum\nolimits_{n=1}^N \delta_{X_n}.$

Analogously to \cite[Theorem 4.1]{bai2023wasserstein} in the setting of a classification problem, we use \cite[Theorem 2.1]{bartl2021sensitivity} and its proof to establish the following theorem. The details are provided in the appendix.
\begin{thm}\label{thm:approx_DAA}
    Under Assumption~\ref{assumption1} and Assumption~\ref{assumption2}, $V_{\theta}(\delta)$ can be approximated by 
    \begin{equation}
        V_{\theta}(\delta) = \mathbb{E}_{\eta_{\delta}}[l(\theta,x)] + o(\delta)\quad \text{as}\,\, \delta \downarrow  0,
    \end{equation}
    where $\eta_{\delta}  
    = \frac{1}{N}\sum\nolimits_{n=1}^N \delta_{\hat{X}_n}$. For $n=1,\dots,N$, each $\hat{X}_n$ is the perturbation of the original sample
    \begin{align}
    \label{eq:DAA_perturbX}
        \hat{X}_n &= X_n + \delta \cdot h(\nabla_x l(\theta; X_n) )\|\nabla_x l(\theta; X_n)\|_*^{q-1} \Upsilon^{1-q},\\
        \Upsilon &= (\frac{1}{N}\sum\nolimits_{n=1}^N \|\nabla_x l(\theta; X_n)\|_*^q)^{1/q}. \label{eq:upsilon:thm}
    \end{align}
\end{thm}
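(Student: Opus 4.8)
The plan is to combine the \emph{exact} first-order expansion of the robust value supplied by the sensitivity analysis of \cite{bartl2021sensitivity} with an explicit, feasible perturbation that attains this expansion. Concretely, I would first invoke \cite[Theorem 2.1]{bartl2021sensitivity} for the $p$-Wasserstein ball $B_\delta(\mu)$ to obtain $V_\theta(\delta) = \E_\mu[l(\theta;x)] + \delta\,\Upsilon + o(\delta)$ as $\delta\downarrow 0$, where $\Upsilon = (\E_\mu[\|\nabla_x l(\theta;x)\|_*^q])^{1/q}$ is exactly the quantity in \eqref{eq:upsilon:thm}, since $\mu$ is the empirical measure over $\{X_1,\dots,X_N\}$. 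This pins down the first-order slope, so the remaining task is purely to exhibit a concrete discrete measure $\eta_\delta$ that is feasible and reproduces this slope: feasibility then forces $\E_{\eta_\delta}[l]\le V_\theta(\delta)$, while the matched slopes force equality up to $o(\delta)$.

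Next I would verify feasibility of the proposed $\eta_\delta = \tfrac{1}{N}\sum_n \delta_{\hat X_n}$ with $\hat X_n$ as in \eqref{eq:DAA_perturbX}. Using the transport plan sending each $X_n$ to $\hat X_n$, one has $W_p(\eta_\delta,\mu)^p \le \tfrac{1}{N}\sum_n \|\hat X_n - X_n\|^p$. From Assumption~\ref{assumption1}, the vector $h(\nabla_x l(\theta;X_n))$ has unit primal norm (it is the extremal subgradient of the dual norm), so $\|\hat X_n - X_n\| = \delta\,\|\nabla_x l(\theta;X_n)\|_*^{q-1}\Upsilon^{1-q}$. Raising to the $p$-th power and using the Hölder identity $(q-1)p=q$, the average collapses to $\delta^p\Upsilon^{-q}\cdot\tfrac{1}{N}\sum_n\|\nabla_x l(\theta;X_n)\|_*^q = \delta^p$ by the definition of $\Upsilon$, so that $\eta_\delta\in B_\delta(\mu)$.

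Then I would evaluate the objective along $\eta_\delta$ by a first-order Taylor expansion at each sample. Writing $\E_{\eta_\delta}[l] = \tfrac{1}{N}\sum_n l(\theta;\hat X_n)$ and expanding, the linear term is $\tfrac{1}{N}\sum_n \langle\nabla_x l(\theta;X_n),\,\hat X_n - X_n\rangle$. The defining property $\langle h(y),y\rangle = \|y\|_*$ from Assumption~\ref{assumption1} converts each inner product into $\delta\,\|\nabla_x l(\theta;X_n)\|_*^{q}\Upsilon^{1-q}$, and averaging yields $\delta\,\Upsilon^{1-q}\Upsilon^q = \delta\,\Upsilon$, precisely the slope from \cite{bartl2021sensitivity}. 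Hence $\E_{\eta_\delta}[l] = \E_\mu[l] + \delta\Upsilon + o(\delta) = V_\theta(\delta) + o(\delta)$, which is the claim.

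The main obstacle is controlling the Taylor remainder under the weak regularity of Assumption~\ref{assumption2}. Lipschitz continuity alone does not guarantee differentiability, so I must either assume, or infer via Rademacher's theorem together with a genericity argument at the finitely many atoms $X_1,\dots,X_N$, that $l(\theta;\cdot)$ is differentiable at each sample point; then the per-point remainders are $o(\|\hat X_n - X_n\|)=o(\delta)$, and because there are only $N$ atoms they aggregate to $o(\delta)$ with no uniformity subtleties. The more delicate conceptual point is that I am not re-deriving the slope from scratch but importing the exact value expansion from \cite{bartl2021sensitivity}; the genuine work lies in extracting from its proof the explicit maximizing direction $h(\nabla_x l)$ with the correct normalization $\|\nabla_x l\|_*^{q-1}\Upsilon^{1-q}$, and checking that this closed-form perturbation \emph{simultaneously} saturates the Wasserstein budget and attains the first-order optimal value.
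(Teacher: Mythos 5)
Your proof is correct in substance, but it takes a somewhat different and more self-contained route than the paper. The paper's own proof of Theorem~\ref{thm:approx_DAA} simply restates the adapted sensitivity theorem of Bartl et al.\ and Bai et al.\ in full --- both the first-order value expansion $V(\delta)=V(0)+\delta\Upsilon+o(\delta)$ \emph{and} the explicit optimal pushforward $\eta_\delta$ --- and then specializes both formulas to the empirical measure $\mu=\frac{1}{N}\sum_n\delta_{X_n}$, so that the expectation becomes an average and the pushforward becomes the uniform law on the perturbed atoms. You instead import only the value expansion and reconstruct the explicit optimizer yourself: you check feasibility of $\eta_\delta$ via the transport plan $(X_n,\hat X_n)$ and the identity $(q-1)p=q$, and then match slopes by a Taylor expansion using $\langle h(y),y\rangle=\|y\|_*$, squeezing $\mathbb{E}_{\eta_\delta}[l]\le V_\theta(\delta)$ against the shared first-order term. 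Interestingly, your feasibility computation and the squeeze argument are exactly what the paper uses later to prove Lemma~\ref{lemma:empirical:V:delta}, so your route effectively merges the two proofs; what it buys is that you need less from the cited reference (only the scalar expansion, not the form of the maximizer), at the cost of having to justify the per-atom Taylor remainders. On that last point you are right to flag that Assumption~\ref{assumption2} (Lipschitz only) does not by itself give differentiability at the $X_n$; this gap is equally present in the paper, which inherits the stronger regularity silently through the citation, so it is a shared imprecision rather than a defect of your argument. One minor caveat: your claim that $h(\nabla_x l)$ has unit primal norm does not follow from Assumption~\ref{assumption1} as literally stated (which only yields $\|h(x)\|\ge 1$); it holds for the intended normalized choice of $h$, and the paper's own Lemma~\ref{lemma:empirical:V:delta} proof makes the same tacit assumption.
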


Equation~\eqref{eq:DAA_perturbX} provides an approximate worst-case perturbation by leveraging the sensitivity expansion of the Wasserstein DRO problem, which directly yields a computationally tractable solution that approximates \eqref{eq:DAA}. However, this approach may not fully explore the adversarial landscape. Therefore, we instead constrain optimization to a subset of the ambiguity set which contains $\eta_\delta$ and has a simpler structure.
The set $\hat{B}_\delta(\mu_N)$ is defined as 
\begin{equation}
\label{eq:ball_em}
    \hat{B}_\delta(\mu_N) = \left\{ \hat{\mu}_N = \frac{1}{N}\sum\nolimits_{n=1}^N \delta_{\hat{X}_n} : \, \hat{X}_i \in \R^d, \,  (\frac{1}{N} \sum\nolimits_{i=1}^N d(X_i, \hat{X}_i)^p)^{1/p} \leq \delta \right\}.
\end{equation}

\begin{lem}\label{lemma:empirical:V:delta}
 Consider the empirical optimization problem
    \begin{equation}
    \label{eq:DAA_em}
    V^e_\theta(\delta)= \text{max}_{\eta \in \hat{B}_\delta(\mu)} \mathbb{E}_{x \sim \eta} \big[l(\theta; x)\big],  
\end{equation}
    obtained by replacing the ambiguity set $B_\delta(\mu)$ in \eqref{eq:DAA} by $\hat{B}_\delta(\mu)$ as defined in \eqref{eq:ball_em}.
 Under Assumptions~\ref{assumption1} and \ref{assumption2}, we have
 \begin{equation}
     V_\theta(\delta) = V^e_\theta(\delta)+o(\delta) \quad \text{as}\,\, \delta \downarrow  0. 
 \end{equation}
\end{lem}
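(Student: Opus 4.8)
The plan is to prove the claim by sandwiching $V^e_\theta(\delta)$ between $V_\theta(\delta)-o(\delta)$ and $V_\theta(\delta)$. The upper bound $V^e_\theta(\delta)\le V_\theta(\delta)$ is the routine direction and follows from the set inclusion $\hat B_\delta(\mu_N)\subseteq B_\delta(\mu_N)$. Indeed, given any $\hat\mu_N=\frac1N\sum_{n=1}^N\delta_{\hat X_n}\in\hat B_\delta(\mu_N)$, the coupling that assigns mass $1/N$ to each pair $(X_n,\hat X_n)$ is an admissible transport plan between $\mu_N$ and $\hat\mu_N$ whose $p$-cost equals $(\frac1N\sum_{n=1}^N d(X_n,\hat X_n)^p)^{1/p}\le\delta$; hence $W_p(\mu_N,\hat\mu_N)\le\delta$ and $\hat\mu_N\in B_\delta(\mu_N)$. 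Maximising the same objective over the smaller feasible set then gives $V^e_\theta(\delta)\le V_\theta(\delta)$.

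For the reverse inequality I would exhibit one explicit feasible point of $\hat B_\delta(\mu_N)$ whose objective already attains $V_\theta(\delta)$ up to $o(\delta)$, namely the perturbed empirical measure $\eta_\delta$ from Theorem~\ref{thm:approx_DAA}. The key step is to verify that $\eta_\delta$ satisfies the aggregate constraint in \eqref{eq:ball_em}. Writing $g_n:=\nabla_x l(\theta;X_n)$, the displacement \eqref{eq:DAA_perturbX} is $\hat X_n-X_n=\delta\,h(g_n)\,\|g_n\|_*^{q-1}\Upsilon^{1-q}$, so that $d(X_n,\hat X_n)=\delta\,\|g_n\|_*^{q-1}\Upsilon^{1-q}$ once the normalisation of $h$ in Assumption~\ref{assumption1} is accounted for. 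Using the Hölder identities $(q-1)p=q$ and $(1-q)p=-q$ together with the definition \eqref{eq:upsilon:thm} of $\Upsilon$, I would compute
\begin{equation*}
\frac1N\sum_{n=1}^N d(X_n,\hat X_n)^p=\delta^p\,\Upsilon^{(1-q)p}\,\frac1N\sum_{n=1}^N\|g_n\|_*^{(q-1)p}=\delta^p\,\Upsilon^{-q}\Big(\frac1N\sum_{n=1}^N\|g_n\|_*^{q}\Big)=\delta^p .
\end{equation*}
Thus $\eta_\delta$ lies exactly on the boundary of $\hat B_\delta(\mu_N)$, is therefore feasible, and yields $V^e_\theta(\delta)\ge\mathbb{E}_{\eta_\delta}[l(\theta;x)]$.

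To finish, I would invoke Theorem~\ref{thm:approx_DAA}, which states $\mathbb{E}_{\eta_\delta}[l(\theta;x)]=V_\theta(\delta)-o(\delta)$ as $\delta\downarrow0$. Chaining the two bounds gives $V_\theta(\delta)-o(\delta)\le V^e_\theta(\delta)\le V_\theta(\delta)$, which is exactly $V_\theta(\delta)=V^e_\theta(\delta)+o(\delta)$, as claimed.

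The step I expect to be the main obstacle is the feasibility computation for $\eta_\delta$: one must track the normalisation of the direction map $h$ in Assumption~\ref{assumption1} carefully, so that the product of the scaling factors $\|g_n\|_*^{q-1}$ and $\Upsilon^{1-q}$ collapses to an average displacement cost of exactly $\delta^p$. By contrast, the inclusion argument and the final chaining are essentially bookkeeping. Should the normalisation leave an aggregate cost of $\delta+o(\delta)$ rather than exactly $\delta$ (for instance from a higher-order correction in the displacement), I would absorb the discrepancy into the $o(\delta)$ term by invoking the Lipschitz continuity of $x\mapsto l(\theta;x)$ from Assumption~\ref{assumption2} to bound the induced change in $\mathbb{E}_{\eta_\delta}[l(\theta;x)]$; the rest of the argument is unaffected.
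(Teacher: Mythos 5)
Your proposal is correct and follows essentially the same route as the paper's proof: you establish $\hat{B}_\delta(\mu)\subseteq B_\delta(\mu)$ via the pairwise coupling, verify that $\eta_\delta$ from Theorem~\ref{thm:approx_DAA} lies in $\hat{B}_\delta(\mu)$ by computing $\frac1N\sum_n d(X_n,\hat X_n)^p=\delta^p$ using $\|h(g_n)\|=1$ and $(q-1)p=q$, and then sandwich $V^e_\theta(\delta)$ between $\mathbb{E}_{\eta_\delta}[l(\theta;x)]$ and $V_\theta(\delta)$ before invoking the theorem's expansion. The feasibility computation you flag as the main obstacle is carried out in exactly this way in the paper, so no additional ideas are needed.
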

The set $\hat{B}_\delta(\mu_N)$ has been previously studied in the literature~\cite{staib2017distributionally,ref:Gao-23} with focus on the setting where $N$ grows. Our paper provides a sensitivity analysis as $\delta \downarrow 0$.  
Finally, we write \eqref{eq:DAA_em} into a constrained problem perturbing the dataset directly
\begin{equation}
\label{eq:DAA_em_data}
    V^e_\theta(\delta) = \text{max}_{\hat{X}_1,\dots,\hat{X}_N} \frac{1}{N}\sum\nolimits_{n=1}^N l(\theta;X_n)\quad \text{subject to} \quad(\frac{1}{N}\sum\nolimits_{i=1}^N d(X_i, \hat{X}_i)^p)^{1/p} \leq \delta.
\end{equation}
It is worth noticing that, as $p\to \infty$, the constraint in (\ref{eq:DAA_em_data}) becomes an $L^\infty$-constraint, i.e., $\text{max}_n(d(X_n, \hat{X}_n)) \leq \delta$, which coincides with the pointwise adversarial attack setting.

\section{Algorithms for Distributional Adversarial Attacks in Deep Hedging}
\label{sec:adv_att_DH}
In this section, we develop a numerical algorithm to solve the distributional adversarial attack problem \eqref{eq:DAA} via its tractable approximation \eqref{eq:DAA_em_data} in the deep hedging framework.  Drawing inspiration from adversarial training methods in machine learning, we adapt the Fast Gradient Sign Method (FGSM)~\cite{goodfellow2014explaining} and Projected Gradient Descent (PGD)~\cite{madry2018towards} to the distributional setting. For each algorithm, we first propose a one-step FGSM-like perturbation to generate adversarial distributions efficiently. We then extend this to a multi-step PGD-like algorithm, which iteratively refines the perturbation with smaller steps while constraining the perturbation through a projection step.  

We first focus on the Black-Scholes model, where the input information is exactly the single price trajectory $\textbf{S}$, as detailed in the appendix. We will then explain how to extend the algorithm to models with more than one trajectory, including the Heston model, where the input is a pair of price and variance trajectories $\textbf{S}$ and $\textbf{v}$. For the readers' convenience, we focus on these two cases here and present algorithms for a general setting in the supplementary material.

In deep hedging, with the loss function $l_{DH}(\theta; \textbf{S})$ as defined in \eqref{eq:OCE_DHloss}, problem \eqref{eq:DAA_em_data} becomes
\begin{equation}
\label{eq:distributional_adversarial_attack_DH} \text{max}_{\hat{\textbf{S}}_1,\dots,\hat{\textbf{S}}_N} \frac{1}{N} \sum\nolimits_{n=1}^N l_{DH}(\theta; \hat{\textbf{S}}_n) \quad \text{subject to} \quad ( \frac{1}{N} \sum\nolimits_{n=1}^N d(\textbf{S}_n,\hat{\textbf{S}}_n)^p )^{1/p} \leq \delta.
\end{equation}

We define the distance in the input space $\R^{T+1}$ as the $L^\infty$ distance, i.e., 
\begin{equation}
    d(\textbf{S}_n,\hat{\textbf{S}}_n) = \|\textbf{S}_n-\hat{\textbf{S}}_n\|_\infty = \text{max}_{t=0,\dots,T} |S_{n,t} - \hat{S}_{n,t}|.
\end{equation}

The infinity norm has the $L^1$-norm as its dual norm, i.e., $\|g\|_* = \sum\nolimits_{t=0}^T |g_t|$, and $h(x)=\text{sign}(x)$, applied component-wise, satisfies $\langle h(x),x\rangle = \|x\|_*$, which implies Assumption~\ref{assumption1}.

\paragraph{Wasserstein Projection Gradient Decent (WPGD).}
Following the above setup, equation (\ref{eq:DAA_perturbX}) provides an asymptotic approximation of the worst-case distributional shift, leading to a one-step perturbation of each path like FGSM
\begin{equation}
    \label{eq:WFGSM}
    \textbf{S}_n \mapsto \textbf{S}_n + \delta \cdot \text{sign}(g^S_n) \|g^S_n\|_*^{q-1} \Upsilon^{1-q},
\end{equation}
where $g^S_n$ is the gradient of $l_{DH}(\theta;\textbf{S}_n)$ with respect to $\textbf{S}_n$ and $\Upsilon$ is defined as $(\frac{1}{N} \sum\nolimits_{n=1}^N \|g^S_n\|_\star^q  )^{1/q}$. 
To propose an analogous PGD-like algorithm, we iteratively apply (\ref{eq:WFGSM}) with adjusted step-size $\beta$ and project the perturbed path back to a ball defined by the constraint in \eqref{eq:distributional_adversarial_attack_DH}. During the projection, each sample $\hat{\textbf{S}}_n$ is updated to
\begin{equation}
    \label{eq:Proj}
    \hat{\textbf{S}}_n \gets \textbf{S}_n + \text{max}(1,\delta/\text{dist})(\hat{\textbf{S}}_n - \textbf{S}_n),
\end{equation}
where $\text{dist} = ( \frac{1}{N} \sum\nolimits_{i=1}^N d(\textbf{S}_i, \hat{\textbf{S}}_i)^p )^{1/p}$.
The overall procedure, called the Wasserstein Projection Gradient Descent (WPGD), is summarized in Algorithm~\ref{alg:W-PGD}.
\begin{algorithm}
    \caption{Wasserstein Projection Gradient Descent (WPGD)}
    \label{alg:W-PGD}
    \begin{algorithmic}[1]
    \For{$i = 1$ to $num\_of\_iteration$}
        \State Compute $\hat \Upsilon := ( \frac{1}{N} \sum\nolimits_{n=1}^N \|\nabla_x l_{DH}(\theta; \hat{\textbf{S}}_n)\|_\star^q  )^{1/q}$
            \For{$n = 1$ to $N$}
            \State $\hat{\textbf{S}}_n \gets \hat{\textbf{S}}_n + \beta \cdot \text{sign}(\nabla_x l_{DH}(\theta; \hat{\textbf{S}}_n)) \| \nabla_x l(\theta; \hat{\textbf{S}}_n)\|_\ast^{q-1}\hat \Upsilon^{1-q},  $
            \EndFor
        \State  $ \hat{\textbf{S}}_1,\dots,\hat{\textbf{S}}_n \gets \text{Proj}_{\hat{B}_\delta(\mu)}(\hat{\textbf{S}}_1,\dots,\hat{\textbf{S}}_n)$
        \EndFor
    \end{algorithmic}
\end{algorithm}

\paragraph{Wasserstein Budget Projection Gradient Descent (WBPGD).} 
We can interpret $d(\textbf{S}_n,\hat{\textbf{S}}_n)^p$ in \eqref{eq:distributional_adversarial_attack_DH} as a budget allocated to path $\textbf{S}_n$ during the attack. This intuition motivates a reparameterization of the perturbed sample as
\begin{equation}
    \hat{\textbf{S}}_n = \textbf{S}_n + \text{budget}_n \times \text{direction}_n,
\end{equation}
where for each sample $\textbf{S}_n$, the variable $\text{budget}_n\in\R_{\geq0}$ represents the magnitude of the perturbation, and $\text{direction}_n $, with the same dimension as $\textbf{S}_n$, denotes its direction bounded within $[-1,1]$. 
Then, the optimization problem~\eqref{eq:distributional_adversarial_attack_DH} can be considered as allocating $\text{budget}_n$ with the restriction $(\frac{1}{N}\sum\nolimits_{n=1}^N \text{budget}_n^p)^{1/p}=\delta$ and optimizing the perturbation direction for each sample.
We have the following one-step updates on $\text{budget}_n$ and $\text{direction}_n$ based on \eqref{eq:WFGSM}.
\begin{lem}
\label{lem:budget}
    For the perturbation $\hat{\textbf{S}}_n$ satisfying \eqref{eq:WFGSM}, we have the equivalent representation 
    \begin{equation}
        \hat{\textbf{S}}_n = \textbf{S}_n + \text{budget}_n \times \text{direction}_n.
    \end{equation}
Let $g^b_n$ and $g^d_n$ be the gradient of $l_{DH}(\theta;\textbf{S}_n+ b \times d)$ with respect to $b$ and $d$ when $b=0$ and $d = \text{sign}(\nabla_{\textbf{S}}l_{DH}(\theta;\textbf{S}_n))$, then
    \begin{equation}
    \text{budget}_n  = \delta \cdot(g^b_n)^{q-1} \Upsilon^{1-q}, \quad
    \text{direction}_n = \text{sign}(g^d_n).
    \end{equation}
    Furthermore, $\Upsilon$ in \eqref{eq:WFGSM} satisfies $\Upsilon = (\frac{1}{N}\sum\nolimits_{n=1}^N (g^b_n)^q)^{1/q}$.
\end{lem}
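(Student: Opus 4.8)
The plan is to verify the reparameterization by directly differentiating the composite loss $F(b,d) := l_{DH}(\theta; \textbf{S}_n + b\,d)$, viewing $b \in \R$ as the scalar budget and $d \in \R^{T+1}$ as the direction, and then matching the resulting expressions term by term against the one-step perturbation \eqref{eq:WFGSM}. Writing $g^S_n = \nabla_{\textbf{S}} l_{DH}(\theta; \textbf{S}_n)$ and recalling from the $L^\infty$ setup of this section that the dual norm is $\|\cdot\|_*=\|\cdot\|_1$ with $h=\mathrm{sign}$, the first step is the chain-rule identity $\partial_b F(b,d) = \langle \nabla_{\textbf{S}} l_{DH}(\theta; \textbf{S}_n + b d),\, d\rangle$. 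Evaluating at $b=0$ and $d = \mathrm{sign}(g^S_n)$ and invoking the defining property of $h$ from Assumption~\ref{assumption1} gives
\begin{equation}
    g^b_n = \langle g^S_n,\, \mathrm{sign}(g^S_n)\rangle = \sum\nolimits_{t=0}^T |g^S_{n,t}| = \|g^S_n\|_* .
\end{equation}
This single identity does most of the work: substituting $g^b_n = \|g^S_n\|_*$ into the claimed budget $\delta (g^b_n)^{q-1}\Upsilon^{1-q}$ recovers exactly the scalar magnitude $\delta\|g^S_n\|_*^{q-1}\Upsilon^{1-q}$ in \eqref{eq:WFGSM}, while substituting it into $(\frac1N\sum\nolimits_n (g^b_n)^q)^{1/q}$ reproduces the definition $\Upsilon = (\frac1N\sum\nolimits_n \|g^S_n\|_*^q)^{1/q}$. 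Hence both the budget and the normalizing constant $\Upsilon$ follow immediately once this display is established.

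For the direction I would show $\mathrm{sign}(g^d_n) = \mathrm{sign}(g^S_n)$, so that $\text{direction}_n = \mathrm{sign}(g^S_n)\in[-1,1]^{T+1}$ and $\text{budget}_n\times\text{direction}_n$ reproduces the full perturbation vector. The chain rule gives $\nabla_d F(b,d) = b\,\nabla_{\textbf{S}} l_{DH}(\theta; \textbf{S}_n + b d)$. The delicate point here — and the main obstacle — is that this gradient vanishes identically at $b=0$, so $\mathrm{sign}(g^d_n)$ cannot be read off literally at $b=0$ but must be understood in the positive-budget regime in which the perturbation is actually applied: for every $b>0$ the scalar prefactor is positive, hence $\mathrm{sign}(\nabla_d F) = \mathrm{sign}(\nabla_{\textbf{S}} l_{DH}(\theta; \textbf{S}_n + b d))$, and letting $b\downarrow 0$ (using continuity of the gradient, which holds for the smooth networks employed here on the set where no component crosses zero) yields $\mathrm{sign}(g^d_n) = \mathrm{sign}(g^S_n)$. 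I expect this $b=0$ degeneracy to be the only nonroutine step; it is resolved by making explicit that the direction is extracted from the one-sided limit $b\downarrow 0$ rather than from the literal value at $b=0$.

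Finally I would assemble the pieces: with $\text{direction}_n = \mathrm{sign}(g^S_n)$ and $\text{budget}_n = \delta\|g^S_n\|_*^{q-1}\Upsilon^{1-q}\ge 0$ (nonnegativity being clear since $\delta,\|g^S_n\|_*,\Upsilon\ge 0$ and $q\ge 1$), the product $\text{budget}_n\times\text{direction}_n$ coincides componentwise with the perturbation in \eqref{eq:WFGSM}, which establishes $\hat{\textbf{S}}_n = \textbf{S}_n + \text{budget}_n\times\text{direction}_n$ and completes the proof. The only genuine computations needed are the two chain-rule differentiations and the inner-product identity $\langle g,\mathrm{sign}(g)\rangle = \|g\|_1$; everything else is direct substitution.
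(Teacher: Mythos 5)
Your proposal is correct and follows essentially the same route as the paper's proof: the key step in both is the chain-rule identity $g^b_n=\langle g^S_n,\mathrm{sign}(g^S_n)\rangle=\|g^S_n\|_1=\|g^S_n\|_*$, after which the budget and $\Upsilon$ formulas follow by substitution into \eqref{eq:WFGSM}, and the direction follows because $g^d_n$ is a positive scalar multiple of $g^S_n$. Your explicit treatment of the $b=0$ degeneracy via the one-sided limit $b\downarrow 0$ is actually more careful than the paper, which simply asserts $g^d_n=g^S_n/\text{budget}_n$ without addressing that the literal gradient at $b=0$ vanishes.
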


By applying the above updates iteratively with step size and projection, we propose a new PGD-like algorithm that optimizes the budget allocation and perturbation direction for each sample independently. The full procedure is summarized in Algorithm~\ref{alg:WBPGD}.

\begin{algorithm}
    \caption{Wasserstein Budget Projection Gradient Descent (WBPGD)}
    \label{alg:WBPGD}
    \begin{algorithmic}[1]
    \For{$i = 1$ to $num\_of\_iteration$}
        \State Compute $g^b_n$ and $g^d_n$ through back-propagation for $n=1,\ldots,N$
        \State Compute $\hat{\Upsilon} := ( \frac{1}{N} \sum\nolimits_{n=1}^N (g^b_n)^q  )^{1/p}$
            \For{$n = 1$ to $N$}
            \State $\text{budget}_n \gets \text{budget}_n + \beta \cdot (g^b_n)^{q-1} \hat{\Upsilon}^{1-q} $
            \State $ \text{direction}_n \gets \text{direction}_n + \beta/\delta \cdot \text{sign}(g^d_n)$
            \State Clamp $\text{direction}_n$ to $[-1,1]$
            \State $\hat{\textbf{S}}_n \gets \textbf{S}_n + \text{budget}_n \times \text{direction}_n  $
            \EndFor
        \State  $ \hat{\textbf{S}}_1,\dots,\hat{\textbf{S}}_n \gets \text{Proj}_{\hat{B}_\delta(\mu)}(\hat{\textbf{S}}_1,\dots,\hat{\textbf{S}}_n)$
        \EndFor
    \end{algorithmic}
\end{algorithm}

\paragraph{Extension to Heston model.} We now consider how to extend the above algorithms to models with several series of data as input, such as the Heston model. In this case, the network strategy uses two input series: the price process  $\textbf{S}$ and the variance process $\textbf{v}$. During the attack phase, we perturb either the price process $\textbf{S}$ alone (referred to as S-Attack) or both the price and variance processes simultaneously (referred to as SV-Attack). For S-Attack, we can directly apply the above algorithms. For SV-attack, we first need to define, for a weight $\lambda >0$, the distance on (\textbf{S},\textbf{V}) as 
\begin{equation}
\label{eq:Heston_dist}
    d((\textbf{S},\textbf{V}),(\hat{\textbf{S}},\hat{\textbf{v}})) = ((\text{max}_t |S_t - \hat{S}_t|)^p + (\lambda\cdot \text{max}_t |v_t - \hat{v}_t|)^p)^{1/p}.
\end{equation}

The price and variance are weighted differently as they are on different scales. Under this distance, we have $d((\textbf{S},\textbf{v}),(\hat{\textbf{S}},\hat{\textbf{v}}))^p = d(\textbf{S},\hat{\textbf{S}})^p+\lambda^p d(\textbf{v},\hat{\textbf{v}})^p$ and the following result by direct calculation.
\begin{cor}
\label{thm:Preturb_Heston}
       Under Assumption~\ref{assumption1} and Assumption~\ref{assumption2}, for input $x=(\textbf{S},\textbf{v})$ with distance defined as \eqref{eq:Heston_dist}, $\Upsilon$ becomes
    \begin{equation}
        \Upsilon = (\sum\nolimits_{n=1}^N  \|\nabla_\textbf{S} l(\theta; \textbf{S}_n,\textbf{v}_n)\|_1^q + \|1/\lambda\cdot \nabla_\textbf{v} l(\theta; \textbf{S}_n,\textbf{v}_n)\|_1^q  )^{1/q}
    \end{equation}
and the perturbation \eqref{eq:DAA_perturbX} can be written as
        \begin{align}
        \label{eq:SVpreturb}
        \textbf{S} &\mapsto \textbf{S}+\text{sign}(\nabla_S l(\theta; \textbf{S},\textbf{v})) \| \nabla_x l(\theta; \textbf{S},\textbf{v})\|_*^{q-1}\Upsilon^{1-q}\\
        \textbf{v} &\mapsto \textbf{v} + 1/\lambda\cdot\text{sign}(\nabla_v l(\theta; \textbf{S},\textbf{v})) \| 1/\lambda\cdot\nabla_v l(\theta; \textbf{S},\textbf{v})\|_*^{q-1}\Upsilon^{1-q}.
        \end{align}
\end{cor}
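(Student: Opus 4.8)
The plan is to reduce the statement to the general perturbation formula \eqref{eq:DAA_perturbX} of Theorem~\ref{thm:approx_DAA} by identifying the norm on the product space $\R^{T+1}\times\R^{T+1}$ that the distance \eqref{eq:Heston_dist} induces, computing its dual norm, and exhibiting the associated map $h$ from Assumption~\ref{assumption1}. Writing $x=(\mathbf{S},\mathbf{v})$ and setting $a=\mathbf{S}-\hat{\mathbf{S}}$, $b=\mathbf{v}-\hat{\mathbf{v}}$, the distance \eqref{eq:Heston_dist} is exactly the mixed norm $\|(a,b)\| = (\|a\|_\infty^p + \lambda^p\|b\|_\infty^p)^{1/p}$: an $\infty$-norm within each of the price and variance blocks, combined across the two blocks by an outer $p$-norm carrying the weight $\lambda$ on the variance block. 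Once this is recognized, the corollary is a specialization of Theorem~\ref{thm:approx_DAA} to this particular norm.

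First I would compute the dual norm. Since the dual of $\|\cdot\|_\infty$ is $\|\cdot\|_1$ and the dual of $\lambda\|\cdot\|_\infty$ is $\tfrac1\lambda\|\cdot\|_1$, while the outer $p$-norm dualizes to the outer $q$-norm, a short Hölder argument gives $\|(y_1,y_2)\|_* = (\|y_1\|_1^q + \|\tfrac1\lambda y_2\|_1^q)^{1/q}$. Substituting $y=(\nabla_\mathbf{S}l,\nabla_\mathbf{v}l)$ into the definition \eqref{eq:upsilon:thm} of $\Upsilon$ then yields the stated expression for $\Upsilon$ directly.

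Next I would verify Assumption~\ref{assumption1} for this mixed norm by exhibiting $h$. The key point is that the dual-achieving direction is \emph{not} the plain sign of the full gradient: within each sample the budget must be split between the two blocks according to their relative weighted $1$-norms. Concretely I would take $h(y)=(h_1,h_2)$ with $h_1 = \|y\|_*^{1-q}\,\|y_1\|_1^{q-1}\,\mathrm{sign}(y_1)$ and $h_2 = \tfrac1\lambda\,\|y\|_*^{1-q}\,\|\tfrac1\lambda y_2\|_1^{q-1}\,\mathrm{sign}(y_2)$, and check by direct computation that $\langle h(y),y\rangle = \|y\|_*$ and $\|h(y)\|=1$; both follow by collecting the two block contributions into $(\|y_1\|_1^q+\|\tfrac1\lambda y_2\|_1^q)/\|y\|_*^{q-1}=\|y\|_*$.

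Finally I would substitute this $h$ and the dual norm into \eqref{eq:DAA_perturbX}. The crucial simplification is that the factor $\|y\|_*^{q-1}$ appearing in \eqref{eq:DAA_perturbX} cancels exactly against the $\|y\|_*^{1-q}$ built into each block of $h$, so the per-block perturbations decouple and reduce to the sign-based updates of the form \eqref{eq:SVpreturb}, with $\|\nabla_\mathbf{S}l\|_1^{q-1}$ and $\|\tfrac1\lambda\nabla_\mathbf{v}l\|_1^{q-1}$ as the respective magnitudes. I expect this cancellation to be the main point to get right: it is precisely what removes the otherwise magnitude-dependent coupling in $h$ and turns a single joint perturbation into the two clean, separately computable price and variance updates. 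The remaining steps are routine Hölder-equality bookkeeping.
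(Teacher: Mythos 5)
Your proposal is correct and follows essentially the same route as the paper: identify the mixed block norm induced by \eqref{eq:Heston_dist}, compute its dual $(\|y_1\|_1^q+\|\tfrac1\lambda y_2\|_1^q)^{1/q}$ via H\"older, exhibit the block-structured $h$ with the $\|y\|_*^{1-q}$ prefactor, and observe the cancellation against $\|y\|_*^{q-1}$ in \eqref{eq:DAA_perturbX} that decouples the price and variance updates. The only cosmetic difference is that the paper proves the general $d$-trajectory case (Corollary~\ref{thm:Preturb_model}) and specializes to $d=2$, whereas you argue the two-block case directly; the mathematics is identical.
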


This corollary reveals that applying an adversarial perturbation jointly to the price and variance processes $(\mathbf{S}, \mathbf{v})$ under the defined metric \eqref{eq:Heston_dist} is equivalent to independently perturbing the price series $\mathbf{S}$ and the scaled variance series $\lambda \mathbf{v}$ separately using the $l_{\infty}$-distance. Consequently, in practical implementation, we can conveniently treat $\mathbf{S}$ and $\lambda \mathbf{v}$ as separate input sequences, by transforming the original sample set
$\{\mathbf{S}_1,\dots,\mathbf{S}_n\}$ into $\{\mathbf{S}_1,\lambda\mathbf{v}_1,\dots,\mathbf{S}_n,\lambda\mathbf{v}_n\}$.


\section{Experimental Results}
\label{sec:experiments_result}

\subsection{Attacks on classical deep hedging strategies}
\label{ssec:attack_result}
We start with applying distributional adversarial attacks to neural network strategies trained in the classical deep hedging \cite{buehler2019deep} setting, i.e., on the Heston model with CVaR loss function. As detailed in Section~\ref{sec:adv_att_DH}, both S-attack and SV-attack are implemented using the WPGD and WBPGD algorithms introduced in Algorithms~\ref{alg:W-PGD} and \ref{alg:WBPGD}. The resulting hedging loss across varying perturbation magnitudes $\delta$ is summarized in Table~\ref{tab:attack_results}. The base case ($\delta=0$) corresponds to the unperturbed hedging loss. Table~\ref{tab:attack_results} shows that the adversarial loss significantly increases as the attack magnitude ($\delta$) increases. Additionally, the WBPGD method consistently outperforms the WPGD method, especially at larger perturbation magnitudes. Therefore, WBPGD will be used as the main attack method in subsequent experiments.

To understand practical implications and quantify the extent of distortion in the path space due to the attack, we assessed perturbation impacts through covariance matrix comparisons, as shown in Table~\ref{tab:DH_Heston_cov}. The Frobenius distance between the covariance matrices of perturbed and original paths, given the original covariance matrix norm ($\approx 386$), indicates minor covariance distortions, especially for small perturbations ($\delta<0.1$). Specifically, simultaneous perturbations on both price and variance processes (SV-Attack) result in less pronounced covariance changes compared to perturbations only on the price process (S-Attack). 
In the appendix we also report autocorrelation function comparisons. 
Overall, together with with Table~\ref{tab:attack_results}, these analyses show that adversarial attacks may significantly deteriorate the hedging strategy's performance, even under seemingly modest perturbations, thereby underscoring the necessity for robust neural network models in financial applications.

\begin{table}[H]
    \centering
    \caption{Robustness of classical deep hedging strategy under different attack methods and magnitudes}
    \label{tab:attack_results}
    \begin{tabular}{lccccccc}
    \toprule
    $\delta$ & 0 & 0.01 & 0.03 & 0.05 & 0.1 & 0.3 & 0.5 \\
    \midrule
    S-WBPGD & 1.9280 & 1.9642 & 2.0432 & 2.1356 & 2.4466 & 4.5771 & 8.0745 \\
    SV-WBPGD & 1.9280 & 1.9659 & 2.0485 & 2.1451 & 2.4660 & 4.5898 & 7.7391 \\
    S-WPGD & 1.9280 & 1.9642 & 2.0431 & 2.1343 & 2.4369 & 4.5148 & 7.5411 \\
    SV-WPGD & 1.9280 & 1.9652 & 2.0457 & 2.1378 & 2.4393 & 4.4802 & 7.4678 \\
    \bottomrule
    \end{tabular}
\end{table}
\vspace{-2em}
\begin{table}[H]
    \centering
    \caption{Distance between covariance matrices of perturbed and original paths}
    \label{tab:DH_Heston_cov}
    \begin{tabular}{lcccccc}
    \toprule
    $\delta$ & 0.01 & 0.03 & 0.05 & 0.1 & 0.3 & 0.5 \\
    \midrule
    S-Attack & 0.1421 & 0.4038 & 0.6295 & 1.0248 & 2.3864 & 4.5057 \\
    SV-Attack & 0.1355 & 0.3836 & 0.5929 & 0.9548 & 2.1787 & 3.8617 \\
    \bottomrule
    \end{tabular}
\end{table}

\subsection{Adversarial training}
\label{ssec:experiment_setup}
We introduce adversarial training, which aims to enhance the robustness of strategies against distributional adversarial attacks and hence solve the DRO problem. We adopt the standard deep hedging methodology \cite{buehler2019deep} as baseline and expand it to incorporate adversarial examples during training. Experimental details (e.g., network architecture, hyperparameters) are provided in the appendix.

\paragraph{Loss functions.}
In standard deep hedging, the network parameters $\theta$ are optimized using the loss function defined by
$
\mathcal{L}_{clean}(\theta) = \sum\nolimits_{n=1}^N l_{DH}(\theta; X_n).$
In adversarial training, we separate the optimization problem into two parts: the inner maximization problem and the outer minimization problem. During the inner maximization part, the network parameters $\theta$ are fixed and we apply a distributionally adversarial attack to find the worst-case perturbation. With the adversarial perturbation $\hat{X}_n$ obtained from the distributional adversarial attack, we can then minimize the expected loss function with respect to the network parameters $\theta$ in the outer minimization part. Following \cite{goodfellow2014explaining,zhang2019theoretically}, our training uses an enhanced loss function
\begin{equation}
\label{eq:adversarial_training}
\mathcal{L}_{adv}(\theta) =  \alpha \cdot \sum\nolimits_{n=1}^N  l_{DH}(\theta; X_n) + \sum\nolimits_{n=1}^N l_{DH}(\theta; \hat{X}_n),
\end{equation}
where $\{\hat{X}_1,...,\hat{X}_n\}$ are adversarially perturbed versions of the original samples and $\alpha$ balances the importance of clean versus adversarial samples.
This leads to an iterative process of adversarial training, alternating between generating adversarial samples and optimizing the network parameters.
\paragraph{Dataset.}
We conduct our experiments using three well-established financial models: Black–Scholes, Heston, and the General Affine Diffusion (GAD) model. Here we present results for the Heston model in Section \ref{ssec: Heston}, while model details and results for the other models are provided in the appendix. Results for real market data are reported in Section~\ref{sec:real_data} below.
 For each model, we generate extensive training datasets of 100,000 sample paths. To examine robustness across varying dataset sizes, we partition each dataset into smaller subsets with sizes $N$ ranging from 5,000 to 100,000 samples. Neural networks are independently trained on these subsets, and the average performance is assessed and reported on a fixed test set, which contains 1 million paths, generated from the same distribution.  In addition, we generate a validation set of 100,000 paths, but only $N$ paths will be used for validation, so that the training is exposed to only a limited number of data depending on $N$.

\paragraph{Robustness of adversarial training.}
Strategies trained with our adversarial procedure achieve lower losses under distributional perturbations than classical deep hedging strategies, indicating that the method effectively approximates the desired distributionally robust solution. Detailed experiments and results are provided in the appendix.

\subsection{Adversarial training improves out-of-sample and out-of-distribution performance}\label{ssec: Heston}

\paragraph{Out-of-sample performance.}  
Given robust strategies 
alongside the corresponding clean strategies, Figure~\ref{fig:OOSP_Heston} evaluates hedging performance on a test set of 1 million simulated paths approximating the true data-generating distribution. The midline presents the average performance of strategies trained on partitioned datasets of size $N$, while the shaded area shows the performance range. From the plot, adversarial training significantly outperforms conventional methods, especially when data is scarce: at $N=5,\!000$, SV-Attack achieves a 54\% lower mean hedging loss than clean training ($2.86$ vs $6.21$) while reducing worst-case outcomes by 66\% (max loss $3.60$ vs $10.62$). When the data size becomes larger, the empirical distribution becomes closer to the true underlying distribution, and all strategies approach near-identical performance ($\sim\!1.95$). However, the robust strategy still outperforms the clean one even at relatively smaller scales. In addition, though S-Attack and SV-Attack show comparable average performance, SV-Attack exhibits tighter variance across all $N$, indicating the advantage of allowing perturbation in the variance process. 

\paragraph{Out-of-distribution performance.}
In prior experiments, test data were drawn from the same distribution as the training data. To further assess generalizability, we now evaluate the strategy on out-of-distribution (OOD) samples generated under perturbed parameter regimes. Specifically, we generate 100 new parameter configurations by scaling the original values by factors uniformly sampled from $[0.9, 1.1]$, introducing bounded deviations of $\pm 10\%$. For each perturbed configuration, 10,000 sample paths are simulated, resulting in a comprehensive OOD dataset of 1 million paths. Figure~\ref{fig:OODP_Heston} illustrates the strategy's performance on this OOD dataset. Despite the models never encountering these perturbed parameter regimes during training, the observed performance trends align closely with the out-of-sample results in Figure~\ref{fig:OOSP_Heston}, underscoring the robustness of the approach under parameter distribution shifts.

\begin{figure}[htbp]
    \centering
    \begin{subfigure}[b]{0.48\textwidth}
        \includegraphics[width=\textwidth]{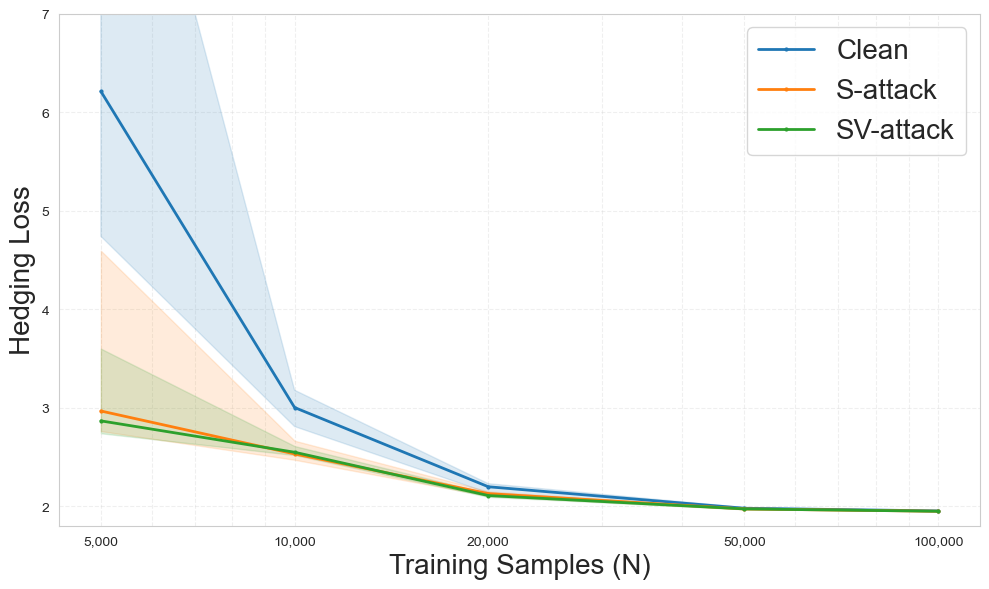}
        \caption{Out-of-sample performance}
        \label{fig:OOSP_Heston}
    \end{subfigure}
    \hfill
    \begin{subfigure}[b]{0.48\textwidth}
        \includegraphics[width=\textwidth]{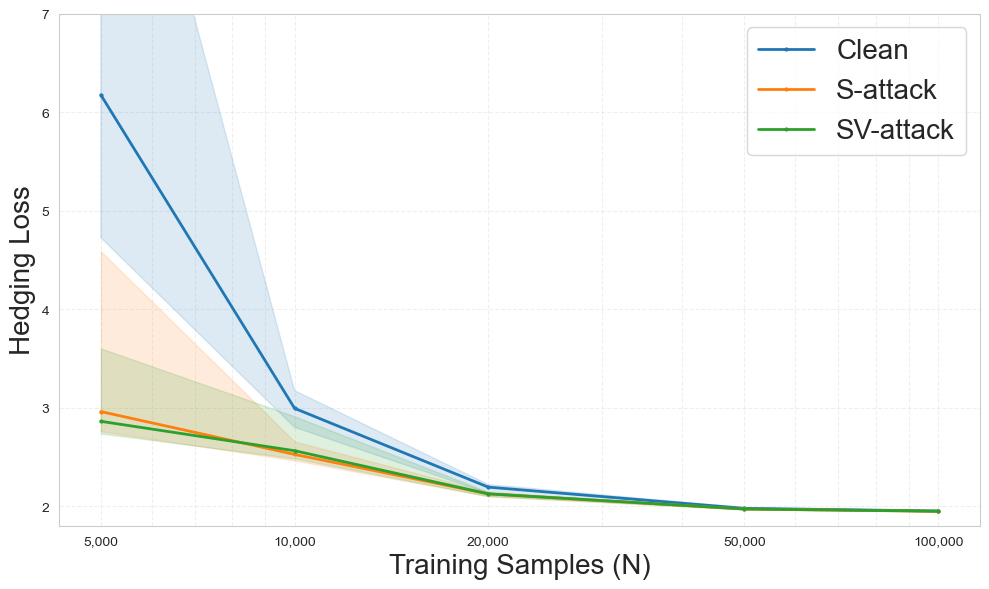}
        \caption{Out-of-distribution performance}
        \label{fig:OODP_Heston}
    \end{subfigure}
    \caption{Comparative hedging performance under Heston models. Shaded regions indicate min-max ranges across training partitions.}
    \label{fig:combined_heston}
\end{figure}

\subsection{Experiments on Market Data}
\label{sec:real_data}
In this section, we evaluate the performance of adversarial training on real market data. Specifically, we train hedging strategies using historical daily closing prices from leading companies in the S\&P 500 index from \cite{yfinance}, covering the period from 26 September 2008 to 8 March 2020.

The time period is chosen in line with the benchmark method \cite{lutkebohmert2022robust} with which we compare our method.

For this evaluation, we constructed two synthetic datasets based on an additional model introduced in the appendix. The \textbf{FIX} dataset is simulated using the General Affine Diffusion (GAD) model with fixed parameters estimated from a 250-day period prior to 8 March 2020. The \textbf{ROBUST} dataset, following \cite{lutkebohmert2022robust}, is also based on the GAD model but incorporates parameter robustness by sampling parameters uniformly from intervals determined by the extreme values across 26 rolling estimates. These estimates are obtained from 250-day windows updated every 100 days between 26 September 2008 and 8 March 2020. For each dataset (FIX and ROBUST), we generate 100{,}000 paths for training, validation, and testing. The starting price is set as the closing price of the respective company on 8 March 2020, and each trajectory is scaled to begin from 10.

Directly training on the FIX and ROBUST datasets corresponds to the methods proposed in Deep Hedging~\cite{buehler2019deep} and Robust Deep Hedging~\cite{lutkebohmert2022robust}, respectively. Building on this foundation, we implement the adversarial framework described in Section~\ref{ssec:experiment_setup}, resulting in two adversarial variants of the strategies.
Similar to \cite{lutkebohmert2022robust}, we evaluate out-of-sample performance on real data by computing profit and loss (PnL) for hedging strategies on the tested price trajectories starting form 9 March 2020. 
Table~\ref{tab:real_data_performance} summarizes results for five companies.\footnote{Including Apple (AAPL), Amazon (AMZN), Microsoft (MSFT), Google (GOOGL), and Berkshire Hathaway (BRK-B)}, where bold entries indicate top-performing results per company.
To address the limitations of using a single price trajectory, we refer the reader to the Appendix for details of the evaluation protocol and an additional evaluation result.

Several notable patterns emerge from this analysis. The clean strategy trained on the ROBUST dataset generally outperforms the clean strategy trained on the FIX dataset. This aligns with the conclusion of \cite{lutkebohmert2022robust}, which highlights the benefit of incorporating parameter uncertainty into the training data. However, our proposed adversarial training framework consistently achieves better performance overall. When trained on the FIX dataset, adversarial training yields notable improvements over clean training across all companies. In cases where clean training on the ROBUST dataset performs better than adversarial training on the FIX dataset, applying adversarial training to the ROBUST dataset further enhances performance. These findings underscore the effectiveness of the adversarial training framework we propose in addressing model misspecification and enhancing strategy robustness.

\begin{table}[h]
\centering
\caption{Testing performance (P\&L) on tested price trajectory}
\label{tab:real_data_performance}
\begin{tabular}{l r r r r r}
\toprule
\textbf{Method}  & \textbf{AAPL} & \textbf{AMZN} & \textbf{BRK-B} & \textbf{GOOGL} & \textbf{MSFT} \\
\midrule
Clean training on FIX~\cite{buehler2019deep} & -2.261         & -0.981         & -2.086         & -1.275         & -4.341 \\
Clean training on ROBUST~\cite{lutkebohmert2022robust} & -0.830         & -0.849         & -0.281         & 0.026          & -0.223 \\
Adversarial training on FIX & \textbf{-0.579} & \textbf{-0.291} & \textbf{-0.127} & -0.459         & -0.564 \\
Adversarial training on ROBUST & -0.739         & -0.860         & -0.294         & \textbf{0.199} & \textbf{-0.144} \\
\bottomrule
\end{tabular}
\end{table}

\section{Conclusion}
We presented a robust deep hedging framework that leverages adversarial training under Wasserstein ambiguity to address the challenges of model misspecification and distributional shifts. By formulating the distributionally robust optimization problem as a minimax objective and approximating it through a tractable reformulation, we demonstrated how deep hedging strategies can be trained adversarially. Empirical results across a variety of widely used synthetic models and data regimes show that adversarially trained strategies achieve improved out-of-sample and out-of-distribution performance, especially under structural changes or limited data availability. Further experiments on real market data suggest that these strategies also generalize well beyond simulated settings, maintaining robustness in periods of market stress. These findings underscore the practical value of robust training methods in financial environments characterized by uncertainty and instability.

\paragraph{Limitations and future work.} 
Our adversarially trained deep hedging framework enhances robustness, but its performance remains sensitive to the choice of the Wasserstein radius, which may differ across models. Future work could explore potentially stronger distributional adversarial attack methods and explore extensions to alternative ambiguity set geometries beyond Wasserstein balls.

\paragraph{Acknowledgment} GH’s research is supported by the Department of Mathematics at Imperial College London through the Roth Scholarship. We also thank G-Research for the travel support to attend NeurIPS.

\newpage
\bibliographystyle{unsrt}

\bibliography{reference}



\newpage
\section*{NeurIPS Paper Checklist}

\begin{enumerate}

\item {\bf Claims}
    \item[] Question: Do the main claims made in the abstract and introduction accurately reflect the paper's contributions and scope?
    \item[] Answer: \answerYes{} 
    \item[] Justification: The abstract and Section~\ref{sec:introduction} accurately reflect our paper's contributions and scope on adversarial attack and training in deep hedging framework.
    \item[] Guidelines:
    \begin{itemize}
        \item The answer NA means that the abstract and introduction do not include the claims made in the paper.
        \item The abstract and/or introduction should clearly state the claims made, including the contributions made in the paper and important assumptions and limitations. A No or NA answer to this question will not be perceived well by the reviewers. 
        \item The claims made should match theoretical and experimental results, and reflect how much the results can be expected to generalize to other settings. 
        \item It is fine to include aspirational goals as motivation as long as it is clear that these goals are not attained by the paper. 
    \end{itemize}

\item {\bf Limitations}
    \item[] Question: Does the paper discuss the limitations of the work performed by the authors?
    \item[] Answer: \answerYes{} 
    \item[] Justification: We discuss the limitations of the work and future expectations at the end of the paper.
    \item[] Guidelines:
    \begin{itemize}
        \item The answer NA means that the paper has no limitation while the answer No means that the paper has limitations, but those are not discussed in the paper. 
        \item The authors are encouraged to create a separate "Limitations" section in their paper.
        \item The paper should point out any strong assumptions and how robust the results are to violations of these assumptions (e.g., independence assumptions, noiseless settings, model well-specification, asymptotic approximations only holding locally). The authors should reflect on how these assumptions might be violated in practice and what the implications would be.
        \item The authors should reflect on the scope of the claims made, e.g., if the approach was only tested on a few datasets or with a few runs. In general, empirical results often depend on implicit assumptions, which should be articulated.
        \item The authors should reflect on the factors that influence the performance of the approach. For example, a facial recognition algorithm may perform poorly when image resolution is low or images are taken in low lighting. Or a speech-to-text system might not be used reliably to provide closed captions for online lectures because it fails to handle technical jargon.
        \item The authors should discuss the computational efficiency of the proposed algorithms and how they scale with dataset size.
        \item If applicable, the authors should discuss possible limitations of their approach to address problems of privacy and fairness.
        \item While the authors might fear that complete honesty about limitations might be used by reviewers as grounds for rejection, a worse outcome might be that reviewers discover limitations that aren't acknowledged in the paper. The authors should use their best judgment and recognize that individual actions in favor of transparency play an important role in developing norms that preserve the integrity of the community. Reviewers will be specifically instructed to not penalize honesty concerning limitations.
    \end{itemize}

\item {\bf Theory assumptions and proofs}
    \item[] Question: For each theoretical result, does the paper provide the full set of assumptions and a complete (and correct) proof?
    \item[] Answer: \answerYes{} 
    \item[] Justification: Complete proofs of all theoretical results are provided in the Appendix~\ref{app:proof}, with assumptions clearly stated.
    \item[] Guidelines:
    \begin{itemize}
        \item The answer NA means that the paper does not include theoretical results. 
        \item All the theorems, formulas, and proofs in the paper should be numbered and cross-referenced.
        \item All assumptions should be clearly stated or referenced in the statement of any theorems.
        \item The proofs can either appear in the main paper or the supplemental material, but if they appear in the supplemental material, the authors are encouraged to provide a short proof sketch to provide intuition. 
        \item Inversely, any informal proof provided in the core of the paper should be complemented by formal proofs provided in appendix or supplemental material.
        \item Theorems and Lemmas that the proof relies upon should be properly referenced. 
    \end{itemize}

    \item {\bf Experimental result reproducibility}
    \item[] Question: Does the paper fully disclose all the information needed to reproduce the main experimental results of the paper to the extent that it affects the main claims and/or conclusions of the paper (regardless of whether the code and data are provided or not)?
    \item[] Answer: \answerYes{} 
    \item[] Justification: We disclose all the information needed to reproduce all the results of the paper in Section~\ref{ssec:experiment_setup} and Appendix~\ref{app:experiment}. The codes to reproduce the results are provided in supplemental material.
    \item[] Guidelines:
    \begin{itemize}
        \item The answer NA means that the paper does not include experiments.
        \item If the paper includes experiments, a No answer to this question will not be perceived well by the reviewers: Making the paper reproducible is important, regardless of whether the code and data are provided or not.
        \item If the contribution is a dataset and/or model, the authors should describe the steps taken to make their results reproducible or verifiable. 
        \item Depending on the contribution, reproducibility can be accomplished in various ways. For example, if the contribution is a novel architecture, describing the architecture fully might suffice, or if the contribution is a specific model and empirical evaluation, it may be necessary to either make it possible for others to replicate the model with the same dataset, or provide access to the model. In general. releasing code and data is often one good way to accomplish this, but reproducibility can also be provided via detailed instructions for how to replicate the results, access to a hosted model (e.g., in the case of a large language model), releasing of a model checkpoint, or other means that are appropriate to the research performed.
        \item While NeurIPS does not require releasing code, the conference does require all submissions to provide some reasonable avenue for reproducibility, which may depend on the nature of the contribution. For example
        \begin{enumerate}
            \item If the contribution is primarily a new algorithm, the paper should make it clear how to reproduce that algorithm.
            \item If the contribution is primarily a new model architecture, the paper should describe the architecture clearly and fully.
            \item If the contribution is a new model (e.g., a large language model), then there should either be a way to access this model for reproducing the results or a way to reproduce the model (e.g., with an open-source dataset or instructions for how to construct the dataset).
            \item We recognize that reproducibility may be tricky in some cases, in which case authors are welcome to describe the particular way they provide for reproducibility. In the case of closed-source models, it may be that access to the model is limited in some way (e.g., to registered users), but it should be possible for other researchers to have some path to reproducing or verifying the results.
        \end{enumerate}
    \end{itemize}

\item {\bf Open access to data and code}
    \item[] Question: Does the paper provide open access to the data and code, with sufficient instructions to faithfully reproduce the main experimental results, as described in supplemental material?
    \item[] Answer: \answerYes{} 
    \item[] Justification: We provide codes in the supplemental material with clear instructions to reproduce all experimental results.
    \item[] Guidelines:
    \begin{itemize}
        \item The answer NA means that paper does not include experiments requiring code.
        \item Please see the NeurIPS code and data submission guidelines (\url{https://nips.cc/public/guides/CodeSubmissionPolicy}) for more details.
        \item While we encourage the release of code and data, we understand that this might not be possible, so “No” is an acceptable answer. Papers cannot be rejected simply for not including code, unless this is central to the contribution (e.g., for a new open-source benchmark).
        \item The instructions should contain the exact command and environment needed to run to reproduce the results. See the NeurIPS code and data submission guidelines (\url{https://nips.cc/public/guides/CodeSubmissionPolicy}) for more details.
        \item The authors should provide instructions on data access and preparation, including how to access the raw data, preprocessed data, intermediate data, and generated data, etc.
        \item The authors should provide scripts to reproduce all experimental results for the new proposed method and baselines. If only a subset of experiments are reproducible, they should state which ones are omitted from the script and why.
        \item At submission time, to preserve anonymity, the authors should release anonymized versions (if applicable).
        \item Providing as much information as possible in supplemental material (appended to the paper) is recommended, but including URLs to data and code is permitted.
    \end{itemize}

\item {\bf Experimental setting/details}
    \item[] Question: Does the paper specify all the training and test details (e.g., data splits, hyperparameters, how they were chosen, type of optimizer, etc.) necessary to understand the results?
    \item[] Answer: \answerYes{} 
    \item[] Justification: We describe the experimental setup in detail in Section~\ref{ssec:experiment_setup} and Appendix~\ref{app:experiment}, with all necessary information provided.
    \item[] Guidelines:
    \begin{itemize}
        \item The answer NA means that the paper does not include experiments.
        \item The experimental setting should be presented in the core of the paper to a level of detail that is necessary to appreciate the results and make sense of them.
        \item The full details can be provided either with the code, in appendix, or as supplemental material.
    \end{itemize}

\item {\bf Experiment statistical significance}
    \item[] Question: Does the paper report error bars suitably and correctly defined or other appropriate information about the statistical significance of the experiments?
    \item[] Answer: \answerYes{} 
    \item[] Justification: We report the out-of-sample and out-of-distribution performance in Fig~\ref{fig:combined_heston} containing min-max ranges across training partitions. 
    \item[] Guidelines:
    \begin{itemize}
        \item The answer NA means that the paper does not include experiments.
        \item The authors should answer "Yes" if the results are accompanied by error bars, confidence intervals, or statistical significance tests, at least for the experiments that support the main claims of the paper.
        \item The factors of variability that the error bars are capturing should be clearly stated (for example, train/test split, initialization, random drawing of some parameter, or overall run with given experimental conditions).
        \item The method for calculating the error bars should be explained (closed form formula, call to a library function, bootstrap, etc.)
        \item The assumptions made should be given (e.g., Normally distributed errors).
        \item It should be clear whether the error bar is the standard deviation or the standard error of the mean.
        \item It is OK to report 1-sigma error bars, but one should state it. The authors should preferably report a 2-sigma error bar than state that they have a 96\% CI, if the hypothesis of Normality of errors is not verified.
        \item For asymmetric distributions, the authors should be careful not to show in tables or figures symmetric error bars that would yield results that are out of range (e.g. negative error rates).
        \item If error bars are reported in tables or plots, The authors should explain in the text how they were calculated and reference the corresponding figures or tables in the text.
    \end{itemize}

\item {\bf Experiments compute resources}
    \item[] Question: For each experiment, does the paper provide sufficient information on the computer resources (type of compute workers, memory, time of execution) needed to reproduce the experiments?
    \item[] Answer: \answerYes{} 
    \item[] Justification: We provide the details of computer resources we use to train the neural network in Appendix~\ref{app:experiment}.
    \item[] Guidelines:
    \begin{itemize}
        \item The answer NA means that the paper does not include experiments.
        \item The paper should indicate the type of compute workers CPU or GPU, internal cluster, or cloud provider, including relevant memory and storage.
        \item The paper should provide the amount of compute required for each of the individual experimental runs as well as estimate the total compute. 
        \item The paper should disclose whether the full research project required more compute than the experiments reported in the paper (e.g., preliminary or failed experiments that didn't make it into the paper). 
    \end{itemize}
    
\item {\bf Code of ethics}
    \item[] Question: Does the research conducted in the paper conform, in every respect, with the NeurIPS Code of Ethics \url{https://neurips.cc/public/EthicsGuidelines}?
    \item[] Answer: \answerYes{} 
    \item[] Justification: Our work does not violate NeurIPS Code of Ethics.
    \item[] Guidelines:
    \begin{itemize}
        \item The answer NA means that the authors have not reviewed the NeurIPS Code of Ethics.
        \item If the authors answer No, they should explain the special circumstances that require a deviation from the Code of Ethics.
        \item The authors should make sure to preserve anonymity (e.g., if there is a special consideration due to laws or regulations in their jurisdiction).
    \end{itemize}

\item {\bf Broader impacts}
    \item[] Question: Does the paper discuss both potential positive societal impacts and negative societal impacts of the work performed?
    \item[] Answer: \answerYes{} 
    \item[] Justification: In Section~\ref{sec:introduction}, we emphasize that potential issues of model misspecification, which may result in suboptimal or even misleading decisions.
This highlights the practical importance of designing strategies that remain robust to small changes in the underlying distribution. 
    \item[] Guidelines:
    \begin{itemize}
        \item The answer NA means that there is no societal impact of the work performed.
        \item If the authors answer NA or No, they should explain why their work has no societal impact or why the paper does not address societal impact.
        \item Examples of negative societal impacts include potential malicious or unintended uses (e.g., disinformation, generating fake profiles, surveillance), fairness considerations (e.g., deployment of technologies that could make decisions that unfairly impact specific groups), privacy considerations, and security considerations.
        \item The conference expects that many papers will be foundational research and not tied to particular applications, let alone deployments. However, if there is a direct path to any negative applications, the authors should point it out. For example, it is legitimate to point out that an improvement in the quality of generative models could be used to generate deepfakes for disinformation. On the other hand, it is not needed to point out that a generic algorithm for optimizing neural networks could enable people to train models that generate Deepfakes faster.
        \item The authors should consider possible harms that could arise when the technology is being used as intended and functioning correctly, harms that could arise when the technology is being used as intended but gives incorrect results, and harms following from (intentional or unintentional) misuse of the technology.
        \item If there are negative societal impacts, the authors could also discuss possible mitigation strategies (e.g., gated release of models, providing defenses in addition to attacks, mechanisms for monitoring misuse, mechanisms to monitor how a system learns from feedback over time, improving the efficiency and accessibility of ML).
    \end{itemize}
    
\item {\bf Safeguards}
    \item[] Question: Does the paper describe safeguards that have been put in place for responsible release of data or models that have a high risk for misuse (e.g., pretrained language models, image generators, or scraped datasets)?
    \item[] Answer: \answerNA{} 
    \item[] Justification: Our paper does not pose a risk of potential misuse.
    \item[] Guidelines:
    \begin{itemize}
        \item The answer NA means that the paper poses no such risks.
        \item Released models that have a high risk for misuse or dual-use should be released with necessary safeguards to allow for controlled use of the model, for example by requiring that users adhere to usage guidelines or restrictions to access the model or implementing safety filters. 
        \item Datasets that have been scraped from the Internet could pose safety risks. The authors should describe how they avoided releasing unsafe images.
        \item We recognize that providing effective safeguards is challenging, and many papers do not require this, but we encourage authors to take this into account and make a best faith effort.
    \end{itemize}

\item {\bf Licenses for existing assets}
    \item[] Question: Are the creators or original owners of assets (e.g., code, data, models), used in the paper, properly credited and are the license and terms of use explicitly mentioned and properly respected?
    \item[] Answer: \answerNA{} 
    \item[] Justification: Our paper does not use any existing assets.
    \item[] Guidelines:
    \begin{itemize}
        \item The answer NA means that the paper does not use existing assets.
        \item The authors should cite the original paper that produced the code package or dataset.
        \item The authors should state which version of the asset is used and, if possible, include a URL.
        \item The name of the license (e.g., CC-BY 4.0) should be included for each asset.
        \item For scraped data from a particular source (e.g., website), the copyright and terms of service of that source should be provided.
        \item If assets are released, the license, copyright information, and terms of use in the package should be provided. For popular datasets, \url{paperswithcode.com/datasets} has curated licenses for some datasets. Their licensing guide can help determine the license of a dataset.
        \item For existing datasets that are re-packaged, both the original license and the license of the derived asset (if it has changed) should be provided.
        \item If this information is not available online, the authors are encouraged to reach out to the asset's creators.
    \end{itemize}

\item {\bf New assets}
    \item[] Question: Are new assets introduced in the paper well documented and is the documentation provided alongside the assets?
    \item[] Answer: \answerNA{} 
    \item[] Justification: Our paper does not release any new assets.
    \item[] Guidelines:
    \begin{itemize}
        \item The answer NA means that the paper does not release new assets.
        \item Researchers should communicate the details of the dataset/code/model as part of their submissions via structured templates. This includes details about training, license, limitations, etc. 
        \item The paper should discuss whether and how consent was obtained from people whose asset is used.
        \item At submission time, remember to anonymize your assets (if applicable). You can either create an anonymized URL or include an anonymized zip file.
    \end{itemize}

\item {\bf Crowdsourcing and research with human subjects}
    \item[] Question: For crowdsourcing experiments and research with human subjects, does the paper include the full text of instructions given to participants and screenshots, if applicable, as well as details about compensation (if any)? 
    \item[] Answer: \answerNA{} 
    \item[] Justification: Our paper does not involve crowdsourcing nor research with human subjects.
    \item[] Guidelines:
    \begin{itemize}
        \item The answer NA means that the paper does not involve crowdsourcing nor research with human subjects.
        \item Including this information in the supplemental material is fine, but if the main contribution of the paper involves human subjects, then as much detail as possible should be included in the main paper. 
        \item According to the NeurIPS Code of Ethics, workers involved in data collection, curation, or other labor should be paid at least the minimum wage in the country of the data collector. 
    \end{itemize}

\item {\bf Institutional review board (IRB) approvals or equivalent for research with human subjects}
    \item[] Question: Does the paper describe potential risks incurred by study participants, whether such risks were disclosed to the subjects, and whether Institutional Review Board (IRB) approvals (or an equivalent approval/review based on the requirements of your country or institution) were obtained?
    \item[] Answer: \answerNA{} 
    \item[] Justification: Our paper does not involve crowdsourcing nor research with human subjects.
    \item[] Guidelines:
    \begin{itemize}
        \item The answer NA means that the paper does not involve crowdsourcing nor research with human subjects.
        \item Depending on the country in which research is conducted, IRB approval (or equivalent) may be required for any human subjects research. If you obtained IRB approval, you should clearly state this in the paper. 
        \item We recognize that the procedures for this may vary significantly between institutions and locations, and we expect authors to adhere to the NeurIPS Code of Ethics and the guidelines for their institution. 
        \item For initial submissions, do not include any information that would break anonymity (if applicable), such as the institution conducting the review.
    \end{itemize}

\item {\bf Declaration of LLM usage}
    \item[] Question: Does the paper describe the usage of LLMs if it is an important, original, or non-standard component of the core methods in this research? Note that if the LLM is used only for writing, editing, or formatting purposes and does not impact the core methodology, scientific rigorousness, or originality of the research, declaration is not required.
    \item[] Answer: \answerNA{} 
    \item[] Justification: Our paper does not involve LLMs as any important, original, or non-standard components.
    \item[] Guidelines:
    \begin{itemize}
        \item The answer NA means that the core method development in this research does not involve LLMs as any important, original, or non-standard components.
        \item Please refer to our LLM policy (\url{https://neurips.cc/Conferences/2025/LLM}) for what should or should not be described.
    \end{itemize}

\end{enumerate}

\newpage
\appendix
\section*{Appendix}
\section{Wasserstein Distance}
\label{app:Wasserstein}
For a Wasserstein distributionally robust optimization problem~\eqref{eq:DRO}, the ambiguity set $B_\delta(\mu)$ is defined as a Wasserstein ball centered at $\mu\in\mathcal{P}(\mathbb{R}^d)$, i.e., 
\begin{equation}
    B_\delta(\mu) = \left\{ \eta\in\mathcal{P}(\mathbb{R}^d) : W_p(\mu, \eta) \leq \delta \right\}.
\end{equation}
For any $p\in(1,\infty)$, 
the Wasserstein distance $W_p(\mu, \eta)$ between two distributions $\mu$ and $\eta$ is defined as
\begin{equation}
    W_p(\mu,\eta) = \big(\inf_{\gamma \in \Pi(\mu,\eta)} \int_{\R^d\times \R^d} d(x,y)^p d\gamma(x,y)\big)^{1/p},
\end{equation}
where $\Pi(\mu,\eta)$ denotes the set of all joint distributions on $\R^d\times \R^d$ with marginals $\mu$ and $\eta$ and $d$ denotes a metric on $\mathbb{R}^d$.

For $p=\infty$, the Wasserstein distance becomes the minimal maximal displacement between two distributions
\begin{equation}
W_{\infty}(\mu,\eta) = \inf_{\gamma \in \Pi(\mu,\eta)} \{\gamma\text{-ess}\sup d(x,y)\}.
\end{equation}
Here, $\gamma\text{-ess}\sup$ denotes the essential supremum with respect to the measure $\gamma$ over $\R^d\times\R^d$.

\section{Deep Hedging Examples}
\label{app:deep_hedging}
\paragraph{Case 1: Black--Scholes Model with Entropic Risk Measure.}


In this scenario, we assume that the asset price follows the classical Black--Scholes model
\begin{equation}
    dS_t = m S_t \, dt + \sigma S_t \, dW_t,
\end{equation}
where $m$ is the drift, $\sigma$ is the volatility, and $W_t$ is a standard Brownian motion. The process is discretized in time for training the neural network.

Here, as the process is Markovian, we define the information process directly as the price process $S_t$, which provides all the necessary information to make decisions at time $t$. The network strategy in (\ref{eq:net_strategy}) is simplified to
\begin{equation}
\label{eq:BS_net_strategy}
    \delta_t = f_{\theta_t}(S_t).
\end{equation}
We consider hedging a European call option with terminal payoff
\begin{equation}
 \label{eq:payoff_call}
    P(S_T) = \text{max}(S_T - K, 0),
\end{equation}
where $K$ is the strike price.
To account for risk aversion in the objective function, we adopt the entropic risk measure
\begin{equation}
    \rho(Z) = \frac{1}{\lambda} \log \mathbb{E} \left[ e^{-\lambda Z} \right],
\end{equation}
where $\lambda > 0$ is the risk aversion parameter. This risk measure is commonly used in finance to model the risk preferences of investors \cite{follmerschied}.

By \cite[Example 3.8]{buehler2019deep}, the the entropic risk measure admits the OCE form
\begin{equation} \label{eq:entropic}
    \rho(Z) = \inf_{\omega\in\R}\left\{\omega+\E\left[\exp(-\lambda(Z+\omega))-\frac{1+\log\lambda}{\lambda}\right]\right\}.
\end{equation}

Moreover, the corresponding optimal $\omega$ in \eqref{eq:entropic} is given by
\begin{equation}
    \omega^* = \frac{1}{\lambda}\log\E[\lambda \cdot \exp(-\lambda Z)].
\end{equation}
The corresponding deep hedging loss is then defined as
\begin{equation}
    l_{DH}(\theta,\omega,\textbf{S}) = \omega - \frac{1+\log\lambda}{\lambda}+\exp(-\lambda(\mathrm{PnL}(\theta,\textbf{S})+\omega)).
\end{equation}

\paragraph{Case 2: Heston Model with Conditional Value-at-Risk (CVaR).}
In this scenario, we assume that the asset price follows the Heston stochastic volatility model:
\begin{align}
    dS^1_t &= m S^1_t \, dt + \sqrt{v_t} S^1_t \, dW_t^1, \quad
    dv_t = a(b - v_t) \, dt + \sigma \sqrt{v_t} \, dW_t^2,
\end{align}
where $v_t$ is the stochastic variance process, and the Brownian motions $W_t^1$ and $W_t^2$ satisfy
\begin{equation}
    \mathbb{E}[dW_t^1 dW_t^2] = \rho \, dt.
\end{equation}
The parameters $a$, $b$, and $\sigma$ control the mean reversion speed, long-run variance level, and volatility of volatility, respectively.

As $v_t$ itself is not directly tradable, to hedge the volatility risk, we introduce a second price process representing a variance swap corresponding to the tradable asset. 

The variance swap at time $t$ is given by
\begin{equation}
    S^2_t = \int_0^t v_s \, ds + \frac{v_t - b}{a} (1 - e^{-a (T - t)}) + b (T - t),
\end{equation}
where the integral is approximated by the trapezium rule in practice.

We then hedge through trading both the underlying asset and the variance swap, i.e., we define the combined price process as $S_t = (S^1_t, S^2_t)$. Moreover, since the network needs both the price of the underlying and variance to make decisions, the information process is defined as $I_t = (S_t^1, v_t)$. The Heston model is Markovian with respect to this information process, so the network strategy becomes:
\begin{equation}
\label{eq:Heston_NetSim}
    \delta_t = f_{\theta_t}(S_t^1, v_t).
\end{equation}
We again consider hedging a European call option with the same terminal payoff as in~\eqref{eq:payoff_call}.

To evaluate hedging performance under downside risk, we adopt the Conditional Value-at-Risk (CVaR) risk measure at confidence level $\alpha \in [0,1)$
\begin{align}
    \text{CVaR}_\alpha(Z) &= \frac{1}{1-\alpha}\int_0^{1-\alpha} \text{VaR}_\gamma(Z)d\gamma \\
    \text{VaR}_\gamma(Z) &= \inf\left\{ z \in \mathbb{R} \,:\, \P(Z \leq -z) < \gamma \right\}.
\end{align}
This risk measure captures the expected loss in the worst $1 - \alpha$ fraction of outcomes and is widely used in risk management \cite{mcneil2015quantitative}.
The CVaR can be written in OCE form
\begin{equation}
    \text{CVaR}_\alpha(Z) = \inf_{\omega \in \mathbb{R}} \left\{ \omega + \frac{1}{1 - \alpha} \mathbb{E}[\mathrm{max}(-Z - \omega,0)] \right\},
\end{equation}
 where the optimal $\omega$ is attained at the $\alpha$-quantile of $Z$. 

We then define the corresponding deep hedging loss as
\begin{equation}
    l_{DH}(\theta,\omega,\textbf{S}^1,\textbf{v}) = \omega+\frac{1}{1-\alpha}\text{max}(-\mathrm{PnL}(\theta,\textbf{S}^1,\textbf{v})-\omega,0).
\end{equation}

\paragraph{Case 3: General Affine Diffusion (GAD) Model with Entropic Risk Measure.}
In this scenario, we assume the asset price follows a General Affine Diffusion (GAD) process:
\begin{equation}
dS_t = (b_0 + b_1 S_t) \, dt + (a_0 + a_1 S_t)^\gamma \, dW_t,
\end{equation}
where $W_t$ is a standard Brownian motion and the parameters $b_0$, $b_1$, $a_0$, $a_1$, and $\gamma$ control the drift and diffusion characteristics.

To discretize the process for numerical implementation, we apply the Euler--Maruyama scheme. Following the robust approach of \cite{lutkebohmert2022robust}, we introduce parameter uncertainty through intervals, where at each path and each time step, parameters are drawn uniformly from their respective intervals:
\begin{align}
\text{for } t& = 1,\dots,T:\nonumber \\
&\Delta W_t \sim \mathcal{N}(0, \Delta t), \nonumber \\
& a_0 \sim U[\underline{a}_0, \overline{a}_0], \quad a_1 \sim U[\underline{a}_1, \overline{a}_1], \quad b_0 \sim U[\underline{b}_0, \overline{b}_0], \quad b_1 \sim U[\underline{b}_1, \overline{b}_1], \nonumber \\
& S_t = S_{t-1} + (b_0 + b_1 S_{t-1})\Delta t + (a_0 + a_1 S_{t-1})^\gamma \Delta W_t.
\end{align}
By taking intervals as a point value, the above process approximates the classical GAD process.

As in the Black--Scholes model, the network strategy is defined using only the current asset price:
\begin{equation}
\delta_t = f_{\theta_t}(S_t).
\end{equation}

Following the same setup as in \cite{lutkebohmert2022robust},
we consider hedging an Asian at-the-money put option with the terminal payoff
\begin{equation}
P(\mathbf{S}) = \max\left(S_0 - \frac{1}{T}\sum_{t=1}^T S_t, 0\right).
\end{equation}

We use the same entropic risk measure as in the Black-Scholes model (see Case 1 above).

\section{Proofs}
\label{app:proof}

\begin{proof}[Proof of Theorem~\ref{thm:approx_DAA}]
Our proof follows the approach of \cite[Theorem 4.1]{bai2023wasserstein}, adapted to the classification setting. Specifically, we build on \cite[Theorem 2.1]{bartl2021sensitivity} and its proof, which we restate below as a theorem.
    \begin{thm}[Adapted from \cite{bartl2021sensitivity,bai2023wasserstein}]
    \label{thm:bartl}
    Under Assumption~\ref{assumption1} and Assumption~\ref{assumption2}, the following statements hold.
    \begin{enumerate}
        \item The first-order sensitivity expansion as $\delta\downarrow0$ ensures
            \begin{equation}
                \label{eq:Upsilon_app}
                V(\delta)=V(0)+\delta\Upsilon+o(\delta), \quad\text{where}\quad
                \Upsilon := \mathbb{E}_{x\sim\mu}\left[  \|\nabla_x l(\theta; x)\|_*^q  \right]^{1/q} 
            \end{equation}
            and $q$ is the conjugate exponent of $p$, satisfying $1/q+1/p=1$.

        \item Furthermore, $V(\delta)$ can be approximated by
        \begin{equation}
            V(\delta)=\mathbb{E}_{\eta_{\delta}}[l(\theta,x)]+o(\delta) \quad \text{as}\quad\delta\downarrow0
        \end{equation}
        where the perturbed distribution $\eta_{\delta}$ is explicitly given by
        \begin{equation}
            \label{eq:Qpreturb}
            \eta_{\delta}=\left[x \mapsto x + \delta \cdot h(\nabla_x l(\theta; x)) \| \nabla_x l(\theta; x)\|_*^{q-1}\Upsilon^{1-q}\right]_{\#}\mu.
        \end{equation}
    \end{enumerate}
\end{thm}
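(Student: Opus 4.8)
The plan is to prove the expansion by establishing matching lower and upper bounds on $V(\delta)$, from which both assertions follow at once: the explicit map $\eta_\delta$ serves as the near-optimizer that certifies the lower bound, and this is exactly statement (2). I would first dispose of the lower bound by plugging the candidate transport map $T_\delta(x) = x + \delta\, h(\nabla_x l(\theta;x))\,\|\nabla_x l(\theta;x)\|_*^{q-1}\Upsilon^{1-q}$ into the objective, with $\eta_\delta = (T_\delta)_\#\mu$. Two computations are needed. Feasibility: since $\eta_\delta$ is a pushforward, $W_p(\mu,\eta_\delta)^p \le \mathbb{E}_\mu[\|T_\delta(x)-x\|^p]$, and the normalization by $\Upsilon^{1-q}$ together with the conjugacy identity $(q-1)p=q$ is engineered precisely so that this bound equals $\delta^p$, placing $\eta_\delta$ in $B_\delta(\mu)$. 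Leading order: a first-order Taylor expansion of $l$ along $T_\delta$ gives $\mathbb{E}_{\eta_\delta}[l] - \mathbb{E}_\mu[l] = \delta\,\Upsilon^{1-q}\,\mathbb{E}_\mu[\langle \nabla_x l, h(\nabla_x l)\rangle\,\|\nabla_x l\|_*^{q-1}] + o(\delta)$, and invoking the defining property $\langle h(y),y\rangle = \|y\|_*$ from Assumption~\ref{assumption1} collapses the inner product to $\|\nabla_x l\|_*$, so the leading term is $\delta\,\Upsilon^{1-q}\,\mathbb{E}_\mu[\|\nabla_x l\|_*^q] = \delta\Upsilon$. This yields $V(\delta) \ge \mathbb{E}_{\eta_\delta}[l] = V(0)+\delta\Upsilon+o(\delta)$ and, because $\eta_\delta$ is feasible, already proves statement (2) modulo the matching upper bound.

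For the upper bound I would invoke strong duality for Wasserstein DRO, rewriting $V(\delta) = \inf_{\lambda\ge 0}\{\lambda\delta^p + \mathbb{E}_\mu[\sup_y(l(\theta;y)-\lambda\|y-x\|^p)]\}$. The crux is the asymptotics of the inner conjugate $l^\lambda(x)=\sup_y(l(y)-\lambda\|y-x\|^p)$ as the multiplier $\lambda$ grows: a local expansion plus Hölder's inequality gives $l^\lambda(x) = l(x) + c_p\,\|\nabla_x l(x)\|_*^q\,\lambda^{-(q-1)} + o(\lambda^{-(q-1)})$, and minimizing $\lambda\delta^p + c_p\,\lambda^{-(q-1)}\Upsilon^q$ over $\lambda$ pins the optimal multiplier at the scale $\lambda^\ast \asymp \delta^{-(p-1)}$ and returns exactly $V(0)+\delta\Upsilon+o(\delta)$. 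A more hands-on alternative avoids duality: for any feasible $\eta$ with an optimal coupling $\gamma$, decompose $\mathbb{E}_\eta[l]-\mathbb{E}_\mu[l] = \mathbb{E}_\gamma[\langle\nabla_x l(x),y-x\rangle] + \mathbb{E}_\gamma[R(x,y)]$, bound the linear term by Hölder to get $\le \Upsilon\,W_p(\mu,\eta) \le \delta\Upsilon$, and then show the Taylor remainder is $o(\delta)$ uniformly over the ball.

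The main obstacle is this upper bound, and within it the uniform control of the remainder. The first-order Hölder estimate is immediate, but making $\mathbb{E}_\gamma[R(x,y)] = o(\delta)$ rigorous is delicate because the supremum ranges over an infinite-dimensional family of measures whose transport displacements need not be uniformly small pointwise—only small in $L^p$—so mass can concentrate on atypically large jumps where a naive Taylor bound fails. Following \cite{bartl2021sensitivity}, this is exactly where Assumption~\ref{assumption2} enters: Lipschitz continuity supplies a uniform linear growth bound on the increments of $l$, which dominates the contribution of large displacements and legitimizes the interchange of limit and expectation through dominated convergence. The conjugacy identity $(q-1)p=q$ is what makes the Hölder bound tight and forces equality at leading order, so that the construction in the lower bound is genuinely first-order optimal.

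Assembling the two bounds gives $V(\delta)=V(0)+\delta\Upsilon+o(\delta)$, establishing statement (1), and simultaneously confirms that the explicit $\eta_\delta$ of \eqref{eq:Qpreturb} attains the value to first order, which is statement (2).
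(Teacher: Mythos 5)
Your proposal is correct, but note that the paper does not actually prove this theorem: it is imported verbatim (``Adapted from \cite{bartl2021sensitivity,bai2023wasserstein}'') and the surrounding appendix proof of Theorem~\ref{thm:approx_DAA} only specializes it to the empirical measure. What you have written is a faithful reconstruction of the argument in the cited source: the lower bound via the explicit pushforward $T_\delta$, whose feasibility in $B_\delta(\mu)$ hinges on the conjugacy identity $(q-1)p=q$ and whose first-order gain collapses to $\delta\Upsilon$ through $\langle h(y),y\rangle=\|y\|_*$; and the matching upper bound via either strong duality with the $\lambda^{\ast}\asymp\delta^{-(p-1)}$ scaling or the coupling/H\"older route, with the Taylor remainder tamed by the Lipschitz bound of Assumption~\ref{assumption2}. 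Your identification of the uniform remainder control as the genuinely delicate step is exactly right and is where the cited proof does its real work.

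One small caveat worth recording: the feasibility computation $\mathbb{E}_\mu\bigl[\|T_\delta(x)-x\|^p\bigr]=\delta^p$ requires $\|h(y)\|\le 1$ (i.e., $h(y)$ is a norming element of the primal unit ball), which is strictly more than Assumption~\ref{assumption1} literally states --- the identity $\langle h(y),y\rangle=\|y\|_*$ alone only forces $\|h(y)\|\ge 1$. The paper makes the same implicit use in the proof of Lemma~\ref{lemma:empirical:V:delta}, and the intended examples ($h(x)=\mathrm{sign}(x)|x|^{q-1}$ for $\ell_p$ norms) satisfy it, so this is a shared imprecision rather than a gap in your argument.
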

In the data-driven framework we choose $\mu = \frac{1}{N}\sum\nolimits_{n=1}^N \delta_{X_n}$. Then $\Upsilon$ in \eqref{eq:Upsilon_app} becomes the average
\begin{equation} \label{eq:App-Upsilon-discrete}
    \Upsilon = (\frac{1}{N}\sum\nolimits_{n=1}^N \|\nabla_x l(\theta; X_n)\|_*^q)^{1/q}
\end{equation}
and the perturbation $\eta_\delta$ in \eqref{eq:Qpreturb} becomes a uniform distribution on the perturbed dataset $\{\hat{X}_1,\dots,\hat{X}_N\}$, where each $\hat{X}_n$ satisfies
\begin{equation}
\label{eq:DAA_perturbX2}
    \hat{X}_n = X_n + \delta \cdot h(\nabla_x l(\theta; X_n) )\|\nabla_x l(\theta; X_n)\|_*^{q-1} \Upsilon^{1-q}.
\end{equation}
\end{proof}

\begin{proof}[Proof of Lemma~\ref{lemma:empirical:V:delta}]
By Theorem \ref{thm:approx_DAA}, $\eta_\delta$ is of the form $\frac{1}{N}\sum\nolimits_{n=1}^N \delta_{\hat{X}_n}$ and satisfy
\begin{align}
    \frac{1}{N} \sum\nolimits_{i=1}^N d(X_i, \hat{X}_i)^p &= \frac{1}{N} \sum\nolimits_{i=1}^N \left\|\delta \cdot h(\nabla_x l(\theta; X_n) )\|\nabla_x l(\theta; X_n)\|_*^{q-1} \Upsilon^{1-q}\right\|^p\nonumber\\
    &= \frac{\delta^p}{\Upsilon^{(q-1)p}} \cdot \frac{1}{N} \sum\nolimits_{i=1}^N \|h(\nabla_x l(\theta; X_n)\| \|\nabla_x l(\theta; X_n)\|_*^{(q-1)p}\nonumber\\
    &= \frac{\delta^p}{\Upsilon^{q}} \cdot \frac{1}{N} \sum\nolimits_{i=1}^N \|\nabla_x l(\theta; X_n)\|_*^q\nonumber\\
    &=\delta^p
\end{align}
where we use $\|h(x)\|=\sup_{\|x\|_*\leq1 }\langle h(x), x \rangle=\sup_{\|x\|_*\leq 1}\|x\|_*=1$, $(q-1)p=(1-1/q)pq=q$ for exponent conjugate and \eqref{eq:App-Upsilon-discrete}. Therefore, we have $\eta_\delta\in \hat{B}_\delta(\mu)$.

Moreover, for $\mu = \frac{1}{N}\sum\nolimits_{n=1}^N \delta_{X_n}$ and $\hat{\mu} = \frac{1}{N}\sum\nolimits_{n=1}^N \delta_{\hat{X}_n}$, we have
\begin{equation}
    \pi = \frac{1}{N}\sum\nolimits_{n=1}^N\delta_{(X_n,\hat{X}_n)}\in \Pi(\mu,\hat{\mu}).
\end{equation}
Therefore, by the definition of the Wasserstein distance,
\begin{equation}
    W_p(\mu,\hat{\mu}) \leq (\int d(x,y)^p\;d\pi(x,y))^{1/p} = (\frac{1}{N} \sum\nolimits_{i=1}^N d(X_i, \hat{X}_i)^p)^{1/p}.
\end{equation}
If $(\frac{1}{N} \sum\nolimits_{i=1}^N d(X_i, \hat{X}_i)^p)^{1/p}<\delta$, so is $W_p(\mu,\hat{\mu})$, hence $\hat{B}_\delta(\mu)\subset B_\delta(\mu)$.

Overall, we proved that $\eta_\delta\in \hat{B}_\delta(\mu)\subset B_\delta(\mu)$. Therefore, $\E_{\eta_\delta}[l(\theta;x)]\leq V^e_\theta(\delta)\leq V_\theta(\delta)$ and 
    \begin{equation}
        0\leq \frac{1}{\delta} (V_\theta(\delta)-V^e_\theta(\delta))\leq \frac{1}{\delta} (V_\theta(\delta)-\E_{\eta_\delta}[l(\theta;x)]).
    \end{equation}
    By Theorem \ref{thm:approx_DAA}, $\text{RHS}\to0$ as $\delta\to0$, so $\frac{1}{\delta} (V_\theta(\delta)-V^e_\theta(\delta))$ also converges to $0$. In other words,
     \begin{equation}
     V_\theta(\delta) = V^e_\theta(\delta)+o(\delta) \quad \text{as}\,\, \delta \downarrow  0. 
 \end{equation}
\end{proof}

\begin{proof}[Proof of Lemma~\ref{lem:budget}]
    For $g^S_n = \nabla_{\textbf{S}}l(\theta;\textbf{S}_n)$, the updates \eqref{eq:WFGSM} can be separated into updates on $\text{budget}_n$ and $\text{direction}_n$
\begin{align}
    \Upsilon = (\frac{1}{N}\sum\nolimits_{n=1}^N \|g^S_n\|_*)^{1/q}, \quad
    \text{budget}_n  = \|g^S_n\|_*^{q-1} \Upsilon^{1-q}, \quad
    \text{direction}_n = \text{sign}(g^S_n).
\end{align}
By the chain rule, we have $g^b_n = \langle g^S_n,\text{direction}_n\rangle = \|g^S_n\|_*$ and $g^d_n =  g^S_n/\text{budget}_n$.
The update above becomes
\begin{align}
    \Upsilon = (\frac{1}{N}\sum\nolimits_{n=1}^N (g^b_n)^q)^{1/q}, \quad
    \text{budget}_n  = (g^b_n)^{q-1} \Upsilon^{1-q}, \quad
    \text{direction}_n &= \text{sign}(g^d_n),
\end{align}
which proves the lemma.
\end{proof}

Corollary~\ref{thm:Preturb_Heston} is a special case of the following more general result, which we will prove next.

\begin{cor}
\label{thm:Preturb_model}
Consider the setting of Theorem~\ref{thm:approx_DAA} with inputs of the form $x=(x^1,...,x^d)\in\R^{d\times (T+1)}$ representing $d$-dimensional sequences of length $T+1$. Set the norm as
    \begin{equation}
    \|x\|
  :=(\sum_{i=1}^{d}(\lambda_i\,
         \|x^{i}\|_\infty)^{p})^{1/p},
    \label{eq:norm}
    \end{equation}
    where $\|\cdot\|_\infty$ is the infinity norm defined in the space of the trajectories $\R^{T+1}$.
    Over the input samples $\{X_1,\dots,X_N\}$, where each sample contains $d$ trajectories $X_n = (X^1_n,\dots,X^d_n)$, we define $g_n =(g_n^1,\dots,g^d_n)= \nabla_xl(\theta; (X^1_n,\dots,X^d_n))$ to be the gradient with respect to the input.\\
    In this setting, the perturbation \eqref{eq:DAA_perturbX} can be written as
    \begin{equation}
    \label{eq:trajactory_update}
        \hat{X}^i_n = X^i_n +\frac{1}{\lambda_i}\mathrm{sign}(g^i_n) \| \tfrac{1}{\lambda_i}g^i_n\|_1^{q-1}\Upsilon^{1-q}
    \end{equation}
    for $i=1,\dots,d$ and $n=1,\dots,N$.
    Moreover, $\Upsilon$ becomes
    \begin{equation}
        \Upsilon = (\sum\nolimits_{n=1}^N \sum\nolimits_{i=1}^d \| \tfrac{1}{\lambda_i}g^i_n\|_1^q)^{1/q}
    \end{equation}
    where $\|\cdot\|_1$ is the $l_1$-norm defined on $\R^{T+1}$.
\end{cor}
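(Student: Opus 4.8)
The plan is to treat Corollary~\ref{thm:Preturb_model} as a direct specialization of Theorem~\ref{thm:approx_DAA}: the abstract perturbation formula \eqref{eq:DAA_perturbX} and the constant $\Upsilon$ in \eqref{eq:upsilon:thm} already hold in full generality, so the only work is to make Assumption~\ref{assumption1} concrete for the weighted mixed norm \eqref{eq:norm}. Concretely, I would (i) verify that \eqref{eq:norm} is genuinely a norm (a weighted $\ell_p$ combination of block $\ell_\infty$ norms with $\lambda_i>0$, $p\in(1,\infty)$), (ii) compute its dual norm $\|\cdot\|_*$, (iii) exhibit an explicit duality map $h$ with $\langle h(y),y\rangle=\|y\|_*$, and then (iv) substitute $\|\cdot\|_*$ and $h$ into \eqref{eq:DAA_perturbX}–\eqref{eq:upsilon:thm} and read off the block-wise formulas. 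Assumption~\ref{assumption2} is inherited unchanged and needs no extra argument, and the Heston statement Corollary~\ref{thm:Preturb_Heston} then follows by setting $d=2$, $\lambda_1=1$, $\lambda_2=\lambda$.

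The core computation is the dual norm, which I would obtain by a two-level duality argument. Writing $\langle y,x\rangle=\sum_{i=1}^d\langle y^i,x^i\rangle$ and introducing the reduced scalars $u_i:=\lambda_i\|x^i\|_\infty$, the inner maximization over the direction of each block $x^i$ is exactly $\ell_\infty$–$\ell_1$ duality, giving $\langle y^i,x^i\rangle\le\|y^i\|_1\|x^i\|_\infty=\tfrac{1}{\lambda_i}\|y^i\|_1\,u_i$, with equality for $x^i=\|x^i\|_\infty\,\mathrm{sign}(y^i)$ (componentwise). The outer maximization of $\sum_i\tfrac{1}{\lambda_i}\|y^i\|_1\,u_i$ subject to $\sum_i u_i^p\le1$, $u_i\ge0$, is then weighted $\ell_p$–$\ell_q$ Hölder duality, yielding
\[
\|y\|_*=\Big(\sum_{i=1}^d\big\|\tfrac{1}{\lambda_i}y^i\big\|_1^{\,q}\Big)^{1/q},\qquad \tfrac1p+\tfrac1q=1 .
\]
Substituting $y=g_n$ into \eqref{eq:upsilon:thm}, i.e. using $\|g_n\|_*^q=\sum_i\|\tfrac{1}{\lambda_i}g_n^i\|_1^q$, immediately gives the stated expression for $\Upsilon$.

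For the duality map I would take the block-separable choice with $i$-th block $h(y)^i=\tfrac{1}{\lambda_i}\mathrm{sign}(y^i)\,\|\tfrac{1}{\lambda_i}y^i\|_1^{q-1}\,\|y\|_*^{1-q}$, and check directly that $\langle h(y),y\rangle=\sum_i\|\tfrac{1}{\lambda_i}y^i\|_1^{q}\,\|y\|_*^{1-q}=\|y\|_*$, which verifies Assumption~\ref{assumption1}. Feeding this $h$ into \eqref{eq:DAA_perturbX}, the factor $\|g_n\|_*^{q-1}$ appearing in the perturbation cancels the normalization $\|y\|_*^{1-q}$ carried inside $h(g_n)$, so the update decouples across blocks into $\hat X_n^i=X_n^i+\tfrac{1}{\lambda_i}\mathrm{sign}(g_n^i)\,\|\tfrac{1}{\lambda_i}g_n^i\|_1^{q-1}\,\Upsilon^{1-q}$, which is exactly \eqref{eq:trajactory_update}. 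I expect the main obstacle to be bookkeeping rather than depth: correctly chaining the two levels of Hölder duality with the weights $\lambda_i$ so that the \emph{per-block} $\ell_1$ factor (rather than the full dual norm) is what survives in \eqref{eq:trajactory_update}, and confirming that the chosen $h$ is genuinely block-separable so the perturbation splits into independent updates of each scaled trajectory. A minor point to state carefully is the $1/N$ normalization convention for $\Upsilon$ inherited from \eqref{eq:upsilon:thm}.
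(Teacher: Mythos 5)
Your proposal is correct and follows essentially the same route as the paper's proof: a blockwise $\ell_\infty$--$\ell_1$ Hölder step followed by a weighted $\ell_p$--$\ell_q$ Hölder step to identify the dual norm $\|y\|_*=(\sum_i\|\tfrac{1}{\lambda_i}y^i\|_1^q)^{1/q}$, the same block-separable duality map $h$ with the $\|y\|_*^{1-q}$ normalization that cancels against $\|g_n\|_*^{q-1}$ in \eqref{eq:DAA_perturbX}, and direct substitution into \eqref{eq:upsilon:thm}. Your side remarks (the $1/N$ normalization of $\Upsilon$, and implicitly the dropped $\delta$ in the blockwise update) correctly flag the same notational discrepancies present in the paper's own statement, so no gap remains.
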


\begin{proof}
We first show that the norm in \eqref{eq:norm} has dual norm defined as
\begin{equation}
  \|y\|_*
     =(\sum_{i=1}^{d}
            (\tfrac{1}{\lambda_i}
                   \|y^{i}\|_1)^{q}
            )^{1/q}.
\label{eq:dual}
\end{equation}
With the standard pairing
$\langle x,y\rangle=\sum_{i} \langle x^{i},y^{i}\rangle$, we estimate
\begin{equation}
\bigl|\langle x,y\rangle\bigr|
  \le\sum_{i=1}^d\|x^{i}\|_\infty
        \|y^{i}\|_1
  \le (\sum_{i=1}^{d}(\lambda_i\,
         \|x^{i}\|_\infty)^{p})^{1/p}(\sum_{i=1}^{d}
            (\tfrac{1}{\lambda_i}
                   \|y^{i}\|_1)^{q}
            )^{1/q}
  =\|x\|\,\|y\|_*,
\label{eq:holder}
\end{equation}
where the inequalities hold by Hölder's inequality. 

By setting $\lambda_i\|x^{i}\|_\infty \propto \tfrac{1}{\lambda_i} \|y^{i}\|_1$ and $x^i =\|x^{i}\|_\infty \mathrm{sign}(y^i)$ for each $i$, for any $y$ there exists $x$ such that $\|x\|=1$ and $\bigl|\langle x,y\rangle\bigr|=\|x\|\,\|y\|_*$. Hence \eqref{eq:dual} indeed defines the dual norm by definition. The corresponding function $h(y)$ such that $\langle h(y),y\rangle=y$ is defined as
\begin{equation}
\label{eq:h_function}
    h(y) = \frac{1}{\|y\|_*^{q-1}}(\tfrac{1}{\lambda_1} \mathrm{sign}(y^i)\|\tfrac{1}{\lambda_1}y^i\|_1^{q-1},\dots,\tfrac{1}{\lambda_d}\mathrm{sign}(y^d)\|\tfrac{1}{\lambda_d}y^d\|_1^{q-1}).
\end{equation}
Recall that the perturbation in \eqref{eq:DAA_perturbX} is
\begin{equation}
\label{eq:preturb_proof}
    \hat{X}_n = X_n + \delta \cdot h(g_n)\|g_n\|_*^{q-1} \Upsilon^{1-q}.
\end{equation}
By \eqref{eq:h_function}, $h(g_n)$ becomes,
\begin{equation}
\label{eq:hgn}
    h(g_n) = \frac{1}{\|g_n\|_*^{q-1}}(\tfrac{1}{\lambda_1}\mathrm{sign}(g^1_n)\|\tfrac{1}{\lambda_1}g^1_n\|_1^{q-1},\dots,\tfrac{1}{\lambda_d}\mathrm{sign}(g^d_n)\|\tfrac{1}{\lambda_d}g^d_n\|_1^{q-1}).
\end{equation}
Therefore, bringing \eqref{eq:hgn} into \eqref{eq:preturb_proof}, each trajectory in each sample $X_n$ is perturbed to
\begin{align}
        \hat{X}^i_n &=X^i_n+ \delta \cdot(\tfrac{1}{\|g_n\|_*^{q-1}}\tfrac{1}{\lambda_i}\mathrm{sign}(g^i_n)\|\tfrac{1}{\lambda_1}g^i_n\|_1^{q-1}) \cdot\|g_n\|_*^{q-1} \Upsilon^{1-q}\nonumber\\
        &= X^i_n +\frac{\delta}{\lambda_i}\text{sign}(g^i_n) \| \tfrac{1}{\lambda_i}g^i_n\|_1^{q-1}\Upsilon^{1-q}
    \end{align}
for $i=1,\dots,d$ and $n=1,\dots,N$.
Finally,
    \begin{equation}
        \Upsilon =(\sum\nolimits_{n=1}^N \|g_n\|_*^q)^{1/q}= (\sum\nolimits_{n=1}^N \sum\nolimits_{i=1}^d \| \frac{1}{\lambda_i}g^i_n\|_1^q)^{1/q}.
    \end{equation}
\end{proof}

\section{Experimental Details}
\label{app:experiment}
Here we provide additional experimental details regarding the adversarial training introduced in Section~\ref{ssec:experiment_setup}. Readers can refer to the code for a comprehensive implementation. 
\paragraph{Network Architecture.}
The neural network architecture remains consistent with the standard deep hedging framework \cite{buehler2019deep}, characterized by decision-making at each time step $t$ through:
\begin{equation}
\delta_t = f_t^{\theta_t}(I_t),
\end{equation}
where $I_t$ encapsulates all relevant information available at step $t$. In line with \cite{buehler2019deep}, each $f_t^{\theta_t}$ comprises two hidden layers, each with 20 neurons, batch normalization, and ReLU activation.

\paragraph{Training Procedure.}
Our training procedure begins with a preliminary phase of clean training to establish stable initial parameters. Specifically, this phase lasts 100 epochs for the BS model and 300 epochs for the more complex Heston model. Subsequently, the network undergoes adversarial training for an additional 200 epochs (BS) or 400 epochs (Heston), alternating adversarial example generation and optimization of Eq.\ \eqref{eq:adversarial_training}. For comparison, we train baseline networks (clean strategies) exclusively with clean training for an equivalent total duration (300 epochs for BS, 700 epochs for Heston).
\paragraph{Optimizer and Learning rate.}
Optimization utilizes the Adam optimizer, with decaying learning rate—initially set to 0.005 for BS and 0.05 for Heston. The batch size is set to 10,000 unless the dataset size $N$ is smaller, in which case the entire dataset is utilized per batch.
\paragraph{Hyperparameters.}
Critical adversarial training hyperparameters include $\alpha$, tested at ${0,1,10}$ to gauge the relative influence of clean versus adversarial loss, and perturbation magnitude $\delta$, explored across $\{0.001,0.003,0.005,0.01,0.03,0.05,0.1,0.3,0.5\}$. Hyperparameter selection is performed by evaluating performance on a validation set of size $N$ and selecting the hyperparameters yielding the best validation results.

\paragraph{Adversarial attack.}
During the experiment, we employ the WBPGD algorithm detailed in Algorithm~\ref{alg:WBPGD} for adversarial attacks. We execute this algorithm for 20 iterations, setting the step-size as $\beta=\frac{4}{20}\delta$, which is dependent on the perturbation magnitude $\delta$. Additionally, for the two models considered, input trajectories have identical initial values across all samples. Consequently, we avoid perturbing the initial values by explicitly setting both the perturbation and corresponding gradient components to zero.

\paragraph{Computation time.}
All computational runs are conducted without GPU on AMD EPYC 7742 or Intel Icelake Xeon Platinum 8358 processors equipped with less than 64GB of memory. For standard adversarial training involving 100,000 sample paths, the computation time is approximately 3 hours for the Black-Scholes model and around 10 hours for the Heston model. In contrast, classical deep hedging without adversarial training requires roughly one-tenth of this computational effort. The increased computational demand for adversarial training is reasonable, as each network update includes an additional 20 iterations of adversarial perturbations, enhancing the network's robustness.

\paragraph{Cash-invariant property of convex risk measure.}
By the cash-invariance property \cite{follmerschied} of the convex risk measure $\rho$, we have $\rho(Z+c)=\rho(Z)-c$ for any random variable $Z$ and constant $c$ (representing cash injection). Therefore, in practice, we directly set $p_0$ in \eqref{eq:PnL} to $0$ as it does not affect the optimization problem. Note that we are only interested in hedging here; for pricing the an appropriate choice of $p_0$ could be determined as described in \cite{buehler2019deep}.

\section{Additional Experimental Results}
In this section, we provide supplementary experimental results to further validate and contextualize the analyses presented in Section~\ref{sec:experiments_result}.

\subsection{Autocorrelation function comparison}
Building upon the analysis presented in Section~\ref{ssec:attack_result}, we further examine the impact of adversarial perturbations by comparing the autocorrelation functions (ACFs) of the adversarially perturbed trajectories against those of the original trajectories. For a path $\{x_t\}$, the ACF is defined as:  
\begin{equation}  
\text{ACF}(x, \text{lag}) = \frac{1}{\sigma^2} \sum\nolimits_{i=0}^{\text{lag}} \frac{1}{N-i} \sum\nolimits_{t=1}^{N-i} (x_t - \bar{x})(x_{t+i} - \bar{x}),  
\end{equation}  
where $\bar{x}$ is the empirical mean of the path $x$, and $\sigma^2$ is its empirical variance.  

\begin{figure}[h!]  
\centering  
\begin{subfigure}{0.32\textwidth}  
\includegraphics[width=\textwidth]{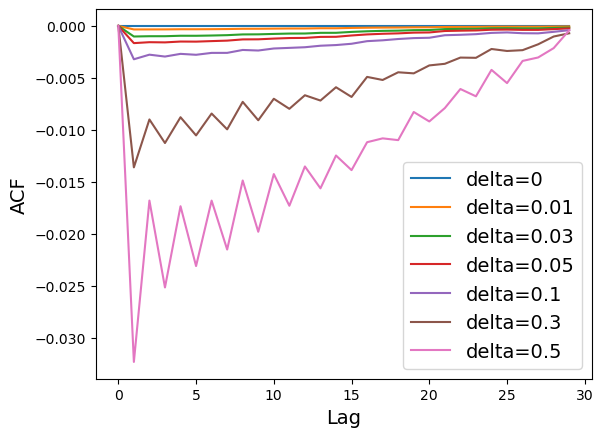}  
\caption{S in S-Attack}  
\end{subfigure}  
\begin{subfigure}{0.32\textwidth}  
\includegraphics[width=\textwidth]{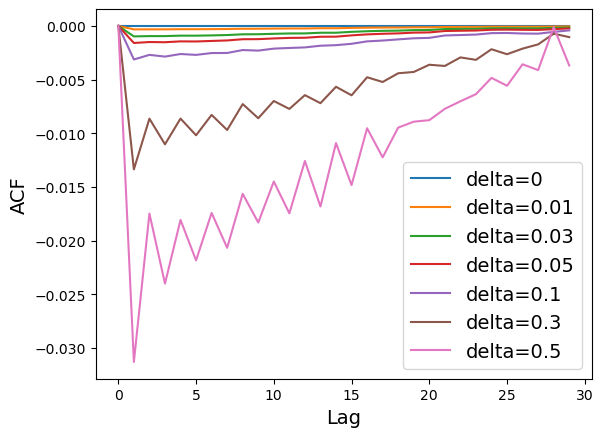}  
\caption{S in SV-Attack}  
\end{subfigure}  
\begin{subfigure}{0.32\textwidth}  
\includegraphics[width=\textwidth]{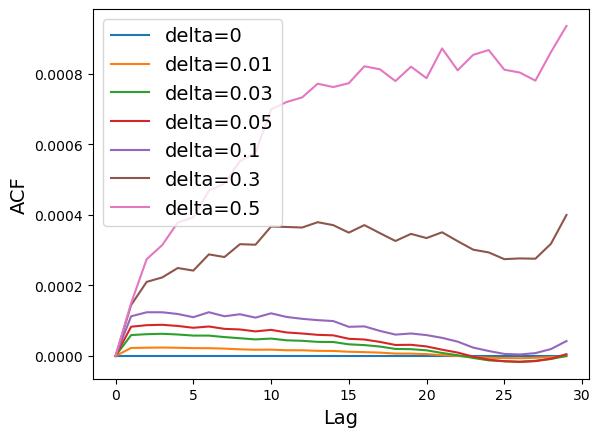}  
\caption{V in SV-Attack}  
\end{subfigure}  
\caption{Difference in Autocorrelation function (ACF) of perturbed paths for different $\delta$ values and original paths.}
\label{fig:DH_Heston_acf}  
\end{figure}  

Fig.~\ref{fig:DH_Heston_acf} illustrates the difference in ACF of perturbed paths and original paths. It reveals minimal deviations ($<0.005$) in autocorrelation for perturbations with magnitude $\delta<0.1$. Despite these modest ACF differences, the corresponding hedging errors are notably large (see Table~\ref{tab:attack_results}), thereby undermining further that an adversarially perturbed sample path distribution  may lead to significant hedging losses, even though comparisons of ACF and covariance matrices  suggest that the perturbed distribution is close to the original distribution.

\subsection{Robustness of adversarial training}
For the robust strategy trained using adversarial training detailed in Section \ref{ssec:experiment_setup}, we examine the adversarial loss to confirm training effectiveness. We evaluate robust strategies trained under distributional adversarial attacks with perturbation magnitudes $\delta = 0.01$ and $0.1$, comparing their performance against a clean strategy trained on an identical dataset of 100,000 samples. Table~\ref{tab:robustness_comparison} illustrates the adversarial loss for each strategy under varying perturbation levels. The robust strategies consistently yield significantly lower adversarial losses compared to the clean strategy, clearly demonstrating that our robust training framework successfully provides a computationally tractable approximation of the theoretically optimal distributionally robust optimization (DRO) solution, enhancing model resilience against adversarial perturbations. Additionally, there is an expected trade-off observed in the loss: for the robust strategy trained with $\delta = 0.1$, the loss at no or small perturbation levels is higher, but the adversarial loss at larger perturbation magnitudes is substantially reduced, aligning with the designed training objective.

\vspace{-1em}
\begin{table}[H]
    \centering
    \caption{Comparison of robust and clean strategies under different perturbation magnitudes}
    \label{tab:robustness_comparison}
    \begin{tabular}{lccccccc}
    \toprule
    $\delta$ & 0 & 0.01 & 0.03 & 0.05 & 0.1 & 0.3 & 0.5 \\
    \midrule
    clean & 1.9162 & 1.9598 & 2.0530 & 2.1560 & 2.4659 & 4.6069 & 8.0379 \\
    robust ($\delta=0.01$) & 1.9186 & 1.9541 & 2.0298 & 2.1130 & 2.3675 & 4.1626 & 7.1445 \\
    robust ($\delta=0.1$) & 1.9796 & 2.0035 & 2.0534 & 2.1064 & 2.2549 & 3.1161 & 4.3156 \\
    \bottomrule
    \end{tabular}
\end{table}

\subsection{Results on adversarial training}
\paragraph{Optimal hyperparameter choice.} 
As detailed in Section~\ref{sec:adversarial_attacks} and Appendix~\ref{app:experiment}, our robust training framework introduces two critical hyperparameters: the attack radius $\delta$ controlling perturbation magnitude, and the balance weight $\alpha$ modulating between nominal and adversarial losses. Through grid search across $(\delta,\alpha) \in [0.001,0.5] \times \{0,1,10\}$, we identify optimal configurations that maximize validation performance for each training set size $N$. The optimal hyperparameters for the Heston model are presented in Table~\ref{tab:hyperparams_Heston} where we can observe that the optimal $\delta$ decreases from $0.5$ at $N=5,000$ to $0.005$ at $N=100,000$. This phenomenon arises because smaller training sizes induce greater divergence between the empirical distribution $\mu_N$ and the true data-generating distribution $\mu$, thus larger adversarial perturbations ($\delta$) are required to bridge this distributional gap. The optimal parameters for the Black-Scholes model show a similar pattern, see Table~\ref{tab:hyperparams_BS}.

\paragraph{Detailed Heston results.}
Table~\ref{tab:OOSP_Heston} provides detailed out-of-sample performance results, supplementing the information shown in Figure~\ref{fig:OOSP_Heston}. Specifically, the table shows improvements in robust strategy performance as the sample size becomes large and demonstrates that the SV-Attack strategy exhibits lower variance compared to the S-Attack strategy.

\paragraph{Black-Scholes results.}
Figure~\ref{fig:BS_OOSP} and Table~\ref{tab:OOSP_BS} show comprehensive results for the Black-Scholes model, revealing patterns analogous to the Heston model. However, the gap between robust and clean strategies is relatively smaller than in Heston. This aligns with expectations - the simpler Black-Scholes model offers fewer exploitable gaps for adversarial training to mitigate, particularly in volatility dynamics. 

\begin{table}[htbp]
  \centering
  \caption{Hyperparameter Choices for Heston and Black-Scholes Models}
  \label{tab:hyperparams}
  \subfloat[Heston Model Hyperparameters
  \label{tab:hyperparams_Heston}]{%
    \begin{tabular}{lccccc}
      \toprule
      Training & \multicolumn{2}{c}{S-Attack} & \multicolumn{2}{c}{SV-Attack} \\
      Samples (N) & $\delta$ & $\alpha$ & $\delta$ & $\alpha$ \\
      \midrule
      5,000 & 0.3 & 1 & 0.5 & 1 \\
      10,000 & 0.1 & 10 & 1.0 & 10 \\
      20,000 & 0.05 & 1 & 0.1 & 1 \\
      50,000 & 0.03 & 0 & 0.03 & 0 \\
      100,000 & 0.01 & 0 & 0.005 & 0 \\
      \bottomrule
    \end{tabular}%
  }%
  \hfill%
  \subfloat[Black-Scholes Model Hyperparameters\label{tab:hyperparams_BS}]{%
    \begin{tabular}{lcc}
      \toprule
      Training Samples (N) & $\delta$ & $\alpha$ \\
      \midrule
      5,000 & 0.01 & 10 \\
      10,000 & 0.005 & 10 \\
      20,000 & 0.003 & 10 \\
      50,000 & 0.001 & 1 \\
      100,000 & 0.001 & 0 \\
      \bottomrule
    \end{tabular}%
  }%
\end{table}

\begin{table}[htbp]
    \centering
    \caption{Detailed out-of-sample performance across sample sizes (N) for SV-Attack, S-Attack, and Clean strategies on Heston model}
    \label{tab:OOSP_Heston}
    \begin{tabular}{lrrrrr}
        \toprule
        Strategy & N & Avg Loss & Min Loss & Max Loss & Variance \\
        \midrule
        SV-Attack & 5,000 & 2.8644 & 2.7386 & 3.5975 & 0.0346 \\
        SV-Attack & 10,000 & 2.5460 & 2.5173 & 2.6087 & 0.0008 \\
        SV-Attack & 20,000 & 2.1063 & 2.0928 & 2.1282 & 0.0002 \\
        SV-Attack & 50,000 & 1.9706 & 1.9669 & 1.9742 & 2.6e-5 \\
        SV-Attack & 100,000 & 1.9469 & 1.9469 & 1.9469 & -- \\
        \midrule
        S-Attack & 5,000 & 2.9646 & 2.7605 & 4.5912 & 0.1574 \\
        S-Attack & 10,000 & 2.5287 & 2.4694 & 2.6629 & 0.0028 \\
        S-Attack & 20,000 & 2.1259 & 2.1027 & 2.1489 & 0.0004 \\
        S-Attack & 50,000 & 1.9705 & 1.9665 & 1.9745 & 3.2e-5 \\
        S-Attack & 100,000 & 1.9472 & 1.9472 & 1.9472 & -- \\
        \midrule
        Clean & 5,000 & 6.2095 & 4.7379 & 10.6187 & 2.0887 \\
        Clean & 10,000 & 3.0000 & 2.8068 & 3.1755 & 0.0129 \\
        Clean & 20,000 & 2.1955 & 2.1329 & 2.2266 & 0.0014 \\
        Clean & 50,000 & 1.9773 & 1.9705 & 1.9841 & 9.2e-5 \\
        Clean & 100,000 & 1.9503 & 1.9503 & 1.9503 & -- \\
        \bottomrule
    \end{tabular}
\end{table}

\begin{table}[htbp]
    \centering
    \caption{Detailed BS model performance metrics across sample sizes (N) for robust and Clean strategies}
    \label{tab:OOSP_BS}
    \begin{tabular}{lrrrr}
        \toprule
        Strategy & N & Avg Loss & Min Loss & Max Loss \\
        \midrule
        Robust & 5,000 & 2.4109 & 2.4040 & 2.4195 \\
        Robust & 10,000 & 2.3947 & 2.3920 & 2.3976 \\
        Robust & 20,000 & 2.3855 & 2.3852 & 2.3861 \\
        Robust & 50,000 & 2.3798 & 2.3794 & 2.3802 \\
        Robust & 100,000 & 2.3769 & 2.3769 & 2.3769 \\
        \midrule
        Clean & 5,000 & 2.4136 & 2.4055 & 2.4208 \\
        Clean & 10,000 & 2.3976 & 2.3947 & 2.3993 \\
        Clean & 20,000 & 2.3860 & 2.3854 & 2.3866 \\
        Clean & 50,000 & 2.3798 & 2.3794 & 2.3801 \\
        Clean & 100,000 & 2.3772 & 2.3772 & 2.3772 \\
        \bottomrule
    \end{tabular}
\end{table}

\begin{figure}[htbp]
    \centering
    \includegraphics[width=0.85\textwidth]{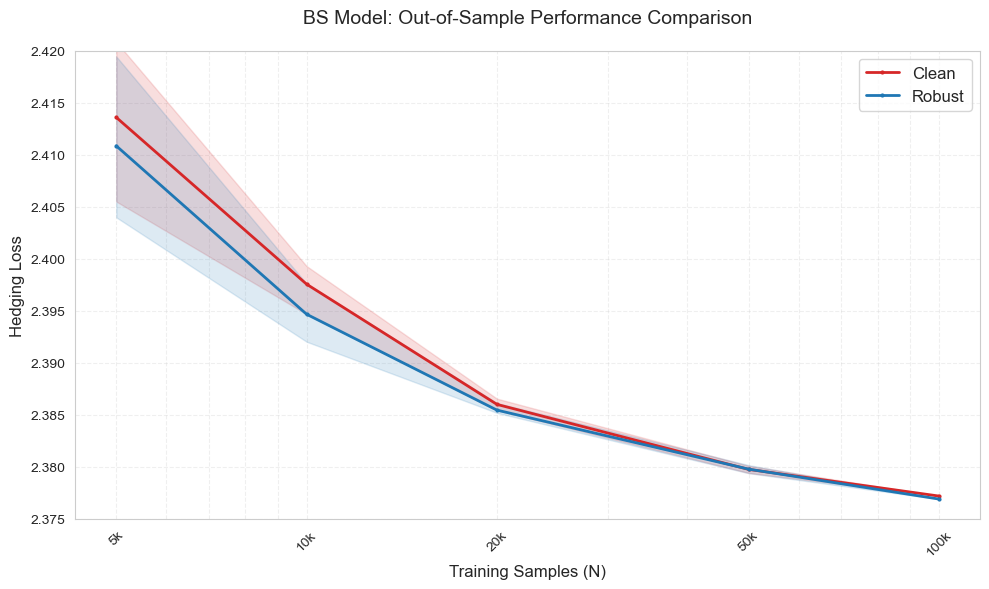}  
    \caption{Out-of-sample hedging performance comparison under Black-Scholes dynamics, comparing robust training with clean strategies. Shaded regions indicate min-max ranges across training partitions.}
    \label{fig:BS_OOSP}
\end{figure}

\newpage
\subsection{Results on Heston models with transaction costs}
In this section, we provide an additional example demonstrating the effectiveness of adversarial training by considering the Heston model with transaction costs. Following Equation~\eqref{eq:PnL}, we modify the profit and loss (P\&L) by incorporating a transaction cost term, which is proportional to the value of the assets traded:
\begin{equation*}
\mathrm{PnL}(\theta,\textbf{I}) = p_0 + \sum\nolimits_{t=1}^{T} \Big(\delta_t^\top(\theta,\textbf{I}) (S_{t+1}(\textbf{I}) - S_t(\textbf{I})) + \epsilon S_t^\top(\textbf{I}) |\delta_t(\theta,\textbf{I}) - \delta_{t-1}(\theta,\textbf{I})| \Big) - P(S_T(\textbf{I}))
\end{equation*}
where $\epsilon$ denotes the transaction cost rate, which we set to 0.005 in this experiment.

Apart from adding an additional transaction cost term to the loss function, the experimental setup follows the same procedure described in Sections~\ref{ssec:experiment_setup} and Appendix~\ref{app:experiment}. To account for transaction costs, we also consider a recurrent neural network architecture, in which the portfolio holding at each time step is passed as input to the next. In this setting, the update rule in \eqref{eq:Heston_NetSim} becomes
\begin{equation}
\label{eq:Heston_NetRec}
\delta_t = f_{\theta_t}(S_t^1, v_t, \delta_{t-1}).
\end{equation}
We refer to the original feedforward network as NetSim, and the recurrent version as NetRec. 

As in Figure~\ref{fig:combined_heston}, we report the out-of-sample performance of both adversarial and clean training in Figure~\ref{fig:combined_heston_tran} and Figure~\ref{fig:combined_heston_tran_rec} on NetSim and NetRec, respectively. The results show that the adversarially trained strategy consistently outperforms the clean strategy. However, this behavior differs from the case without transaction costs: instead of plateauing as the sample size increases, the hedging loss decreases more rapidly. We also observe that although NetRec is more complex, its performance does not outperform NetSim. Both findings may indicate that even with 100,000 samples, the data does not fully capture the underlying distribution in the more complex setting. Under such conditions, adversarial training demonstrates clear advantages over clean training. These findings further highlight the potential of adversarial methods to enhance robustness in data-limited or distributionally uncertain environments.

\begin{figure}[htbp]
    \centering
    \includegraphics[width=0.85\textwidth]{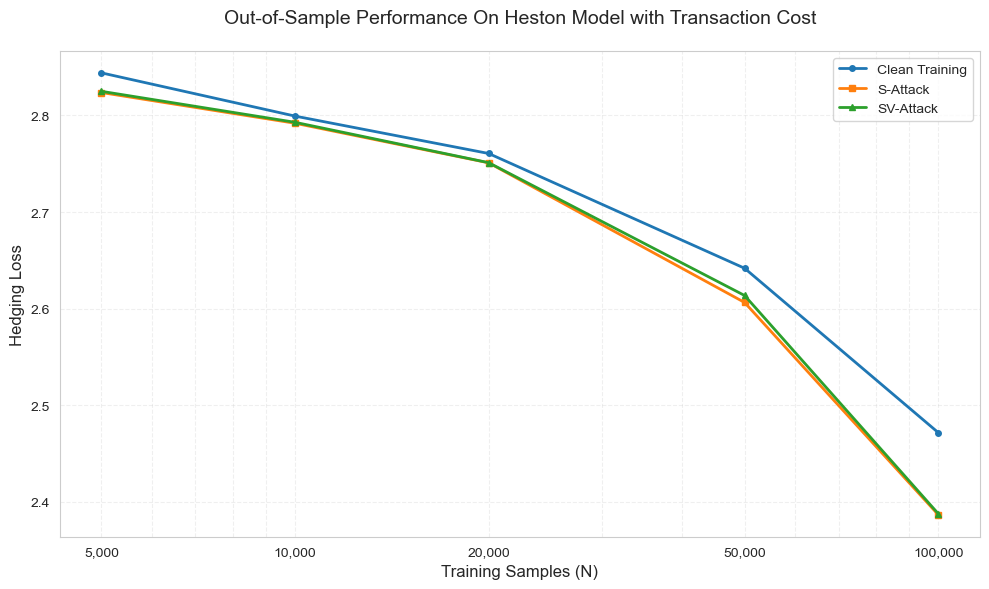}  
    \caption{Out-of-sample hedging performance comparison under Heston model with transaction cost on NetSim, comparing among clean strategies and robust strategies under S-attack and SV-attack.}
    \label{fig:combined_heston_tran}
\end{figure}
\begin{figure}[htbp]
    \centering
    \includegraphics[width=0.85\textwidth]{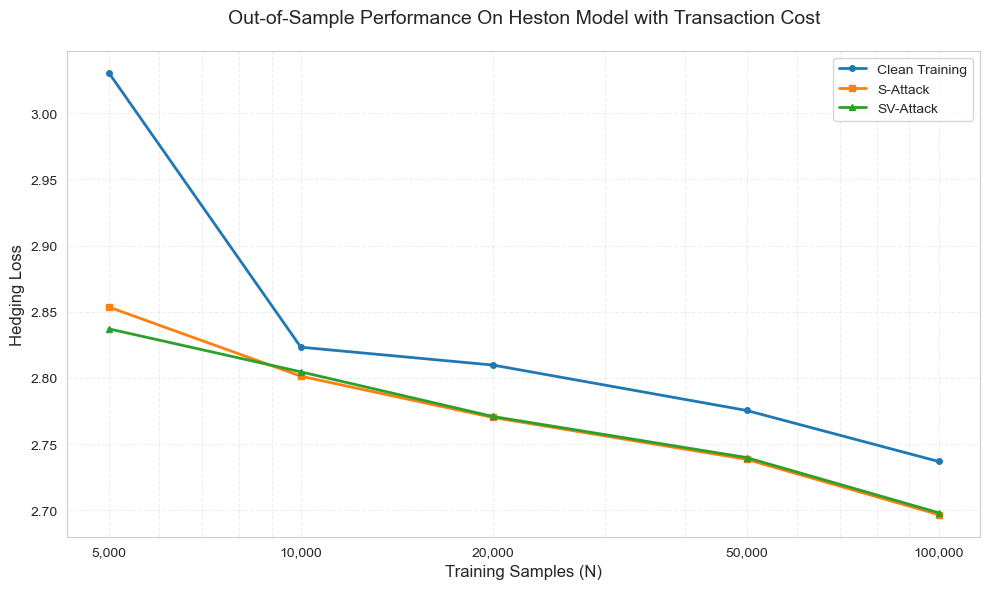}  
    \caption{Out-of-sample hedging performance comparison under Heston model with transaction cost on NetRec, comparing among clean strategies and robust strategies under S-attack and SV-attack.}
    \label{fig:combined_heston_tran_rec}
\end{figure}

\subsection{Results on real market data}

In Section~\ref{sec:real_data}, we evaluated performance on a single real price trajectory, following the approach of \cite{lutkebohmert2022robust}. However, relying on a single path leads to the absence of validation data and exposure to  training randomness. Hence, we adopt the following evaluation protocol. For adversarial training, we use $N = 100{,}000$ training samples and report the average out-of-sample performance across six strategies corresponding to $(\alpha, \delta) \in \{0, 1\} \times \{0.03, 0.05, 0.1\}$. For clean training, we average the results over three independent runs with identical settings. 

For a more thorough assessment of the model performance, we construct a more comprehensive REAL dataset by applying a rolling window of length 30 over the period from 7 March 2020 to 30 September 2021. Each path is normalized to start at a price of 10, yielding a total of 300 sample trajectories. We adopt the entropic risk measure to evaluate strategy performance, thereby avoiding the need to compute option prices—one of the key motivations for using this loss function.
Performance results are summarized in Table~\ref{tab:real_dataset_performance}, where lower values indicate better outcomes, and bold entries highlight the best performance in each column. We observe that the adversarial strategy trained on the FIX dataset consistently outperforms all others across all stocks. In contrast, clean training on the ROBUST dataset does not offer notable improvements over FIX dataset, and applying adversarial training to the ROBUST dataset further degrades performance.
These findings suggest that explicitly incorporating robust parameter intervals into the data generation process may be unnecessary—or even counterproductive. Since adversarial training already mitigates the impact of model misspecification, the use of parameter intervals estimated from historical data that may no longer reflect current market conditions can introduce outdated constraints, ultimately diminishing the relevance and effectiveness of the strategy.

\begin{table}[h]
\centering
\caption{Testing performance (entropic loss on REAL dataset)}
\label{tab:real_dataset_performance}
\begin{tabular}{l r r r r r}
\toprule
\textbf{Method}  & \textbf{AAPL} & \textbf{AMZN} & \textbf{BRK-B} & \textbf{GOOGL} & \textbf{MSFT} \\
\midrule
Clean training on FIX~\cite{buehler2019deep}         & 0.2920 & 0.2971 & 0.1993 & 0.2608 & 0.3269 \\
Clean training on ROBUST~\cite{lutkebohmert2022robust} & 0.3053 & 0.3005 & 0.2410 & 0.3797 & 0.2692 \\
Adversarial training on FIX                          & \textbf{0.2517} & \textbf{0.2408} & \textbf{0.1692} & \textbf{0.2401} & \textbf{0.1891} \\
Adversarial training on ROBUST                       & 0.3077 & 0.3148 & 0.2437 & 0.3965 & 0.2776 \\
\bottomrule
\end{tabular}
\end{table}

Finally, we complement that analysis by reporting the performance on the FIX and ROBUST datasets in Tables~\ref{tab:FIX_data_performance} and~\ref{tab:ROBUST_data_performance}, respectively. Similar to Black-Scholes and Heston model, neural networks are independently trained on subsets of size $N$, and we compute the average performance on the test set. For each adversarial training result, the hyperparameters, including the attack radius $\delta$ 
and the balance weight $\alpha$,
are selected using a validation set of size $N$.

When models are trained and evaluated on the FIX dataset, we observe trends consistent with those seen in the Black-Scholes and Heston experiments: adversarial training consistently outperforms clean training, with the advantage especially pronounced when the sample size is small, suggesting particular effectiveness in data-scarce regimes. The ROBUST dataset, which generalizes the FIX dataset by incorporating model misspecification, further highlights the benefits of adversarial training—strategies trained adversarially on FIX significantly outperform their clean-trained counterparts, demonstrating enhanced robustness to distributional shifts. When models are trained directly on the ROBUST dataset, the pattern becomes more nuanced: performance does not always improve with larger sample sizes, yet adversarial training still generally provides a performance edge. Together, these results underscore the practical value of adversarial training in improving generalization under complex and uncertain market conditions.

\begin{table}[h]
\centering
\caption{Performance comparison on FIX dataset}
\label{tab:FIX_data_performance}
\begin{tabular}{llccccc}
\toprule
Company & Method & N=5000 & 10000 & 20000 & 50000 & 100000 \\
\midrule
\multicolumn{7}{l}{\textit{Trained on FIX dataset}} \\
\midrule
\multirow{3}{*}{AAPL}
& Adversarial & 0.1737 & 0.1752 & 0.1706 & 0.1667 & 0.1643 \\
& Clean        & 0.1960 & 0.1943 & 0.1782 & 0.1734 & 0.1745 \\
& Improvement  & \textbf{0.0223} & \textbf{0.0191} & \textbf{0.0076} & \textbf{0.0067} & \textbf{0.0101} \\
\midrule
\multirow{3}{*}{AMZN}
& Adversarial & 0.1927 & 0.1785 & 0.1763 & 0.1753 & 0.1737 \\
& Clean        & 0.2401 & 0.1970 & 0.1867 & 0.1782 & 0.1770 \\
& Improvement  & \textbf{0.0474} & \textbf{0.0185} & \textbf{0.0104} & \textbf{0.0029} & \textbf{0.0033} \\
\midrule
\multirow{3}{*}{BRK-B}
& Adversarial & 0.1412 & 0.1368 & 0.1353 & 0.1353 & 0.1352 \\
& Clean        & 0.1867 & 0.1649 & 0.1523 & 0.1408 & 0.1363 \\
& Improvement  & \textbf{0.0455} & \textbf{0.0281} & \textbf{0.0169} & \textbf{0.0055} & \textbf{0.0011} \\
\midrule
\multirow{3}{*}{GOOGL}
& Adversarial & 0.2258 & 0.2025 & 0.1985 & 0.1981 & 0.1976 \\
& Clean        & 0.2955 & 0.2224 & 0.2065 & 0.2005 & 0.1984 \\
& Improvement  & \textbf{0.0696} & \textbf{0.0199} & \textbf{0.0080} & \textbf{0.0023} & \textbf{0.0008} \\
\midrule
\multirow{3}{*}{MSFT}
& Adversarial & 0.1549 & 0.1491 & 0.1431 & 0.1436 & 0.1418 \\
& Clean        & 0.1878 & 0.1789 & 0.1551 & 0.1479 & 0.1473 \\
& Improvement  & \textbf{0.0329} & \textbf{0.0298} & \textbf{0.0120} & \textbf{0.0043} & \textbf{0.0055} \\
\bottomrule
\end{tabular}
\end{table}

\begin{table}[h]
\centering
\caption{Performance comparison on ROBUST dataset}
\label{tab:ROBUST_data_performance}
\begin{tabular}{llccccc}
\toprule
Company & Method & N=5000 & 10000 & 20000 & 50000 & 100000 \\
\midrule
\multicolumn{7}{l}{\textit{Trained on FIX dataset}} \\
\midrule
\multirow{3}{*}{AAPL}
& Adversarial & 0.4346 & 0.5023 & 0.4748 & 0.4404 & 0.4490 \\
& Clean        & 0.4387 & 0.5354 & 0.5679 & 0.5328 & 0.4831 \\
& Improvement  & \textbf{0.0041} & \textbf{0.0331} & \textbf{0.0931} & \textbf{0.0924} & \textbf{0.0341} \\
\midrule
\multirow{3}{*}{AMZN}
& Adversarial & 0.4983 & 0.5342 & 0.5875 & 0.5958 & 0.5994 \\
& Clean        & 0.5859 & 0.6509 & 0.6677 & 0.6743 & 0.6737 \\
& Improvement  & \textbf{0.0876} & \textbf{0.1167} & \textbf{0.0802} & \textbf{0.0785} & \textbf{0.0743} \\
\midrule
\multirow{3}{*}{BRK-B}
& Adversarial & 0.5548 & 0.5330 & 0.5416 & 0.4872 & 0.5029 \\
& Clean        & 0.6408 & 0.6733 & 0.6876 & 0.6441 & 0.6351 \\
& Improvement  & \textbf{0.0860} & \textbf{0.1403} & \textbf{0.1459} & \textbf{0.1569} & \textbf{0.1322} \\
\midrule
\multirow{3}{*}{GOOGL}
& Adversarial & 0.5276 & 0.6236 & 0.6563 & 0.6054 & 0.6055 \\
& Clean        & 0.5378 & 0.6707 & 0.6814 & 0.6775 & 0.6844 \\
& Improvement  & \textbf{0.0102} & \textbf{0.0471} & \textbf{0.0252} & \textbf{0.0721} & \textbf{0.0788} \\
\midrule
\multirow{3}{*}{MSFT}
& Adversarial & 0.5859 & 0.6357 & 0.5716 & 0.5505 & 0.5678 \\
& Clean        & 0.6358 & 0.6870 & 0.6993 & 0.6847 & 0.7124 \\
& Improvement  & \textbf{0.0499} & \textbf{0.0513} & \textbf{0.1276} & \textbf{0.1342} & \textbf{0.1446} \\
\midrule
\multicolumn{7}{l}{\textit{Trained on ROBUST dataset}} \\
\midrule
\multirow{3}{*}{AAPL}
& Adversarial & 0.3936 & 0.3607 & 0.3463 & 0.3703 & 0.4325 \\
& Clean        & 0.4229 & 0.3705 & 0.3699 & 0.3875 & 0.4510 \\
& Improvement  & \textbf{0.0293} & \textbf{0.0098} & \textbf{0.0237} & \textbf{0.0172} & \textbf{0.0185} \\
\midrule
\multirow{3}{*}{AMZN}
& Adversarial & 0.4276 & 0.3525 & 0.2701 & 0.2401 & 0.2470 \\
& Clean        & 0.4505 & 0.3627 & 0.2633 & 0.2647 & 0.3065 \\
& Improvement  & \textbf{0.0229} & \textbf{0.0102} & -0.0068 & \textbf{0.0246} & \textbf{0.0595} \\
\midrule
\multirow{3}{*}{BRK-B}
& Adversarial & 0.3702 & 0.2849 & 0.1912 & 0.2123 & 0.2467 \\
& Clean        & 0.4117 & 0.3618 & 0.2265 & 0.1800 & 0.3076 \\
& Improvement  & \textbf{0.0415} & \textbf{0.0769} & \textbf{0.0354} & -0.0323 & \textbf{0.0609} \\
\midrule
\multirow{3}{*}{GOOGL}
& Adversarial & 0.3748 & 0.2584 & 0.1434 & 0.0985 & 0.1427 \\
& Clean        & 0.4222 & 0.3351 & 0.2100 & 0.1276 & 0.2207 \\
& Improvement  & \textbf{0.0474} & \textbf{0.0766} & \textbf{0.0667} & \textbf{0.0291} & \textbf{0.0780} \\
\midrule
\multirow{3}{*}{MSFT}
& Adversarial & 0.3885 & 0.2800 & 0.1945 & 0.2170 & 0.2399 \\
& Clean        & 0.4115 & 0.3473 & 0.2535 & 0.2674 & 0.2944 \\
& Improvement  & \textbf{0.0230} & \textbf{0.0673} & \textbf{0.0590} & \textbf{0.0504} & \textbf{0.0544} \\
\bottomrule
\end{tabular}
\end{table}

\section{Extension to other models}
In Section~\ref{sec:adversarial_attacks}, we introduced distributional adversarial attack algorithms specifically for the Black-Scholes model, with input $\mathbf{I}=\mathbf{S}$, and the Heston model, with input $\mathbf{I}=(\mathbf{S}^1,\mathbf{v})$. In this section, we generalize these approaches to an arbitrary model characterized by input trajectories $\mathbf{I}=(\mathbf{I}^1,\dots,\mathbf{I}^d)$.

We begin by defining a distance measure on the general input space
\begin{equation}
\label{eq:model_dist}
d(\mathbf{I},\hat{\mathbf{I}}) = \left(\sum_{i=1}^d (\lambda_i \cdot \max_t |I^i_t - \hat{I}^i_t|)^p\right)^{1/p},
\end{equation}
analogous to the distance measure introduced for the Heston model in \eqref{eq:Heston_dist}. This definition allows distinct perturbation scales for each trajectory, consistent with the Heston model framework.

Utilizing Corollary~\ref{thm:Preturb_model}, a direct generalization of Corollary~\ref{thm:Preturb_Heston}, the update rule for each scaled trajectory is given by:
\begin{equation}
\label{eq:multi_trajactory_update}
\lambda_i\hat{\mathbf{I}}^i_n = \lambda_i\mathbf{I}^i_n + \delta\cdot\text{sign}(\frac{1}{\lambda_i}g^i_n) \| \frac{1}{\lambda_i}g^i_n \|_1^{q-1}\Upsilon^{1-q},
\end{equation}
which closely parallels the update step for the complete sample set
\begin{equation}
\hat{\mathbf{I}}_n = \mathbf{I}_n + \delta \cdot h(g_n) \|g_n\|_*^{q-1} \Upsilon^{1-q}.
\end{equation}

Furthermore, the total distance $\sum_{n=1}^N d(\mathbf{I}_n, \hat{\mathbf{I}}_n)^p$ and the term $\Upsilon^p$ used during the iterative update process can naturally be decomposed into sums across both trajectory dimensions ($i=1,\dots,d$) and individual samples ($n=1,\dots,N$).

Consequently, similar to the approach in the Heston model, each trajectory will be independently perturbed according to an $l_\infty$-norm metric, effectively transforming the original set of samples into a structured set of scaled trajectories $\{\lambda_i\mathbf{I}^i_n\}_{n=1,\dots,N}^{i=1,\dots,d}$.

\end{document}